\newtheorem{theorem}{Theorem}
\newtheorem{proposition}[theorem]{Proposition}
\newtheorem{corollary}[theorem]{Corollary}
\providecommand{\keywords}[1]
{
  \small	
  \textbf{\textit{Keywords:}} #1
}
\title{Dynamic cycles in edge-colored multigraphs\thanks{This research was supported by grants CONACYT FORDECYT-PRONACES/ 39570/2020 and UNAM DGAPA-PAPIIT IN102320. The second author received a fellowship 782239 from CONACYT.} }
\author[1]{Galeana-Sánchez Hortensia}
\author[1]{Vilchis-Alfaro Carlos\thanks{Corresponding author. Email: vilchiscarlos@ciencias.unam.mx}}
\affil[1]{\normalsize Instituto de Matem\'{a}ticas, Universidad Nacional Aut\'{o}noma de M\'{e}xico, Área de la investigación científica, Circuito Exterior, Ciudad Universitaria, 04510 Coyoacán, CDMX, México}
\date{}
\begin{document}
\maketitle

\begin{abstract}
Let $H$ be a graph possibly with loops and $G$ be a multigraph without loops. An $H$-coloring of $G$ is a function $c: E(G) \rightarrow V(H)$. We will say that $G$ is an $H$-colored multigraph, whenever we are taking a fixed $H$-coloring of $G$. The set of all the edges with end vertices $u$ and $v$ will be denoted by $E_{uv}$. We will say that $W=(v_0,e_0^1, \ldots, e_0^{k_0},v_1,e_1^1,\ldots,e_1^{k_1},v_2,\ldots,v_{n-1},e_{n-1}^1,\ldots,e_{n-1}^{k_{n-1}},v_n)$, where for each $i$ in $\{0,\ldots,n-1\}$, $k_i \geq 1$ and $e_i^j \in E_{v_iv_{i+1}}$ for every $j \in \{1,\ldots, k_i \}$, is a dynamic $H$-walk iff $c(e_i^{k_i})c(e_{i+1}^1)$ is an edge in $H$, for each $i \in \{0,\ldots,n-2\}$. We will say that a dynamic $H$-walk is a closed dynamic $H$-walk whenever $v_0=v_n$ and $c(e_{n-1}^{k_{n-1}})c(e_0^1)$ is an edge in $H$. Moreover, a closed dynamic $H$-walk is called dynamic $H$-cycle whenever $v_i\neq v_j$, for every $\{i,j\}\subseteq \{0,\ldots,v_{n-1}\}$. In particular, a dynamic $H$-walk is an $H$-walk whenever $k_i=1$, for every $i \in \{0,\ldots,n-1\}$, and when $H$ is a complete graph without loops, an $H$-walk is well known as a properly colored walk. 

In this work, we study the existence and length of dynamic $H$-cycles, dynamic $H$-trails and dynamic $H$-paths in $H$-colored multigraphs. To accomplish this, we introduce a new concept of color degree, namely, the \textit{dynamic degree}, which allows us to extend some classic results, as Ore's Theorem, for $H$-colored multigraphs. Also, we give sufficient conditions for the existence of hamiltonian dynamic $H$-cycles in $H$-colored multigraphs, and as a consequence, we obtain sufficient conditions for the existence of properly colored hamiltonian cycle in edge-colored multigraphs, with at least $c\geq 3$ colors.
  
\end{abstract}

\keywords{
Edge colored multigraph, Dynamic $H$-walks, Hamiltonian dynamic $H$-cycles, Properly colored cycles \vspace{1em}

Mathematics Subject Classification : 05C38, 05C15
}

\section{Introduction}
For basic concepts, terminology and notation not defined here, we refer the reader to \cite{bang} and \cite{bondy}. Throughout this work, we will consider graphs, multigraphs (graphs allowing parallel edges) and simple graphs (graphs with no parallel edges or loops). Let $G$ be a multigraph, $V(G)$ and $E(G)$ will denoted the set of vertices and edges of $G$, respectively.

An $c$-\textit{edge-coloring} of a graph $G$ is defined as a mapping $\phi : E(G) \rightarrow \{1,\ldots,c\}$. We will say that $G$ is an $c$-\textit{edge-colored graph} whenever we are taking a fixed $c$-edge-coloring of $G$. Different kinds of walks have been studied in $c$-edge-coloring graphs, for example, \textit{monochromatic walk} (that is a walk with all the edges of the same color) and \textit{properly colored walk} (that is a walk with no consecutive edges having the same color, including the first and the last, in the case that the walk be closed). Several authors have worked with this concepts, for example, Bang-Jensen, Bellitto and Yeo \cite{PC-supereulerian}; Barát and Sárközy \cite{barat}; Guo, Broersma, Li and Zhang \cite{PC-supereulerian3}; Guo, Li, Li and Zhang \cite{PC-supereulerian2}. Properly colored walks are of interest for theoretical reasons, for example, as a generalization of walk in undirected and directed graphs, see \cite{bang}, as well as, in graph theory application, for example, in genetic and molecular biology \cite{dorninger,dorninger2}, social science \cite{cs}, channel assignment is wireless networks \cite{raw,raw2}.

Several authors have studied the existence and the length of PC trails, PC cycles and PC paths, see \cite{chetwynd,feng,loDirac,lo}. In particular, Grossman and H\"{a}ggkvist \cite{2-Cycles} were the first to study the problem of the existence of alternating cycles in $c$-edge-colored graphs, and they proved Theorem \ref{theo:c-cycle}, for $c=2$. Later, Yeo \cite{c-cycles} proved it for $c\geq 2$.

\begin{theorem}[Grossman and H\"{a}ggkvist \cite{2-Cycles}, and Yeo \cite{c-cycles}]\label{theo:c-cycle}
Let $G$ be a $c$-edge-colored graph, $c\geq 2$, with no PC cycle. Then, $G$ has a vertex $z \in V(G)$ such that no connected component of $G-z$ is joined to $z$ with edges of more than one color. 
\end{theorem}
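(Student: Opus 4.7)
The plan is to use induction on $|V(G)|$ and prove the contrapositive: if no vertex $z$ of $G$ satisfies the conclusion (that is, every $z$ admits some component of $G-z$ joined to $z$ by edges of at least two different colors), then $G$ contains a PC cycle.

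First, I would dispatch the case where $G$ has a cut vertex $x$. Let $C_1,\ldots,C_k$ be the components of $G-x$ and set $G_i=G[V(C_i)\cup\{x\}]$. Every PC cycle of $G_i$ is a PC cycle of $G$, so each $G_i$ still satisfies the hypothesis of the contrapositive. By induction, each $G_i$ has a distinguished vertex $z_i$. If some $z_i\neq x$, then the component structure of $G-z_i$ near $z_i$ and the edge coloring at $z_i$ are identical to those in $G_i-z_i$, so $z_i$ is distinguished in $G$, a contradiction. Otherwise every $z_i=x$, which forces all edges between $x$ and each $C_i$ to be monochromatic. But then the components of $G-x$ are $C_1,\ldots,C_k$ and each is joined to $x$ by a single color, so $x$ itself is distinguished in $G$, again a contradiction.

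Next, assume $G$ is $2$-connected. Then $G-z$ is connected for every $z$, so the conclusion collapses to: some vertex has all its incident edges of a single color. Negating this, I assume every vertex has incident edges of at least two distinct colors, and I aim to construct a PC cycle. The natural attempt is to pick a PC path $P=u_0u_1\cdots u_k$ of maximum length. Since the color palette at $u_k$ contains at least two colors and only one color is used by the last edge of $P$, there is an edge $f$ at $u_k$ whose color differs from $c(u_{k-1}u_k)$; by maximality of $P$, the other endpoint of $f$ must lie on $P$, say $f=u_ku_i$. If $c(f)\neq c(u_iu_{i+1})$, then $u_iu_{i+1}\cdots u_kf$ is the desired PC cycle. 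The difficulty is concentrated in the case $c(f)=c(u_iu_{i+1})$: here one must use the extra colors available at $u_k$, at $u_0$ (by the symmetric argument on the other end), and at the interior attachment vertex $u_i$, either to reroute $P$ into a strictly longer PC path (contradicting maximality) or to swap $f$ for a different closing edge that produces a genuine PC cycle.

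The main obstacle is precisely this last case-analysis. For $c=2$ (the original theorem of Grossman and H\"aggkvist) the equality $c(f)=c(u_iu_{i+1})$ is compatible with at most one other color at $u_k$, which trivializes the exchange and yields the cycle immediately. For general $c\geq 3$, however, the closing color may collide with the color of $u_iu_{i+1}$ without forcing any structural collapse, and the careful swap of endpoints and of attachment color — Yeo's contribution — is the technical heart of the argument. I expect to spend most of the proof on this last step, while the cut-vertex reduction and the extremal choice of $P$ are essentially routine.
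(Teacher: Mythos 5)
This theorem is quoted in the paper from the literature (Grossman--H\"{a}ggkvist for $c=2$, Yeo for general $c\geq 2$); the paper contains no proof of it, so your attempt can only be judged on its own terms. Your cut-vertex reduction is correct and routine: if $G$ has no PC cycle and $x$ is a cut vertex, induction applies to each $G_i=G[V(C_i)\cup\{x\}]$, a distinguished vertex $z_i\neq x$ of $G_i$ remains distinguished in $G$ (the only component of $G_i-z_i$ that grows inside $G-z_i$ is the one containing $x$, and the set of edges from $z_i$ to it does not change), and $z_i=x$ for all $i$ forces $x$ itself to be distinguished. The reduction to the $2$-connected case, where the conclusion becomes ``some vertex sees only one color,'' is also fine.

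The genuine gap is that the $2$-connected case is not actually proved. The claim you must establish there --- every vertex incident with edges of at least two colors forces a PC cycle --- carries essentially the full strength of the theorem, and the longest-PC-path device does not resolve it: when the escape edge $f=u_ku_i$ satisfies $c(f)=c(u_iu_{i+1})$ you offer no argument, and this collision can occur simultaneously for every escape edge at $u_k$ and at $u_0$. Your remark that $c=2$ ``trivializes'' this case is also incorrect: with two colors every escape edge at $u_k$ has the single color other than $c(u_{k-1}u_k)$, and each may land on a vertex $u_i$ whose forward path edge has that same color, so no cycle is produced; the two-color case is precisely the nontrivial theorem of Grossman and H\"{a}ggkvist. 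The known proofs do not proceed by longest PC paths: Yeo's argument, for instance, takes a counterexample and chooses a pair $(z,D)$, with $D$ a component of $G-z$ joined to $z$ by at least two colors and $|V(D)|$ minimum, then derives either a PC cycle or a strictly smaller such pair; that extremal choice, not a path-exchange at the endpoints, is what closes the induction. As written, your proposal correctly locates the difficulty but does not overcome it, so the proof is incomplete.
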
 

Abouelaoualim, Das, Fernandez de la Vega, Karpinski, Manoussakis, Martinhon and
Saad \cite{Multi-cycles} gave degree conditions, sufficient for an edge-colored multigraph to have a PC hamiltonian cycle. 

\begin{theorem}[Abouelaoualim et al. \cite{Multi-cycles}]\label{theo:Multi-cycle}
Let $G$ be a $c$-edge-colored multigraph, such that no two parallel edges have the same color, of orden $n$. If for every $x\in V(G)$, $\delta_i(x)\geq \lceil (n+1)/2 \rceil$, for every $i \in \{1,\ldots,c\}$.
\begin{enumerate}[label=(\alph*)]
\item If $c=2$, then $G$ has a PC hamiltonian cycle when $n$ is even, and a PC cycle of length $n-1$, when $n$ is odd.
\item If $c\geq 3$, then $G$ has a PC hamiltonian cycle.
\end{enumerate}
\end{theorem}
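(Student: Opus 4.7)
The plan is to attack both parts by contradiction, fixing a longest PC cycle $C$ in $G$ and showing it must be Hamiltonian (with the parity exception in part (a)). The crucial feature of the hypothesis is that, since parallel edges have distinct colors, each monochromatic class $G_i$ is a simple graph with $\delta(G_i)\geq \lceil (n+1)/2\rceil$, so $G_i$ is Hamiltonian by Dirac's theorem and, in particular, remains connected after deleting any single vertex.

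I would begin with existence. For any $z\in V(G)$ and any color $i$, the graph $G_i-z$ has minimum degree at least $\lceil(n+1)/2\rceil-1\geq (n-1)/2$, so $G_i-z$ is connected. Therefore no vertex $z$ can satisfy the condition of Theorem \ref{theo:c-cycle}, and $G$ must contain a PC cycle. Let $C=u_0u_1\cdots u_{k-1}u_0$ be a longest PC cycle in $G$, and suppose for contradiction that $k<n$.

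For part (b), with $c\geq 3$, pick $v\notin V(C)$. For each color $\alpha$, the hypothesis gives at least $\lceil(n+1)/2\rceil$ color-$\alpha$ neighbors of $v$, of which at most $n-1-k$ can lie outside $V(C)$; hence $v$ has at least $\lceil(n+1)/2\rceil-(n-1-k)$ color-$\alpha$ neighbors in $V(C)$. I would argue, using pigeonhole on the (at least three) colors together with the cyclic structure of $C$, that there must exist an index $i$ and two distinct colors $\alpha,\beta$ with $\phi(vu_i)=\alpha\neq\phi(u_{i-1}u_i)$ and $\phi(vu_{i+1})=\beta\neq\phi(u_{i+1}u_{i+2})$; then replacing the edge $u_iu_{i+1}$ by the path $u_i\,v\,u_{i+1}$ yields a PC cycle longer than $C$, contradicting maximality. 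A separate, simpler argument should handle the regime in which $k$ is so small that the arithmetic above is vacuous: there the high monochromatic minimum degree produces directly, starting from $v$, a PC path whose length exceeds $k$, which can be closed using the existence step.

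For part (a), with $c=2$, every PC cycle alternates and hence has even length, which already explains the parity exception. The argument mirrors the previous one, but now the choices of $\alpha\neq\phi(u_{i-1}u_i)$ and $\beta\neq\phi(u_{i+1}u_{i+2})$ are forced, so instead of pigeonholing on colors one pigeonholes on the index $i$: count the positions at which both $vu_i$ and $vu_{i+1}$ carry the correct colors. The degree hypothesis forces this count to be positive when $n$ is even, giving a Hamiltonian PC cycle, and forces the maximum PC cycle length to be exactly $n-1$ when $n$ is odd. The main obstacle throughout is the insertion/pigeonhole bookkeeping of part (b), especially in the intermediate range of $k$; once that insertion lemma is in place, part (a) follows by a parallel but more constrained argument.
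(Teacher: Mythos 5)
First, a point of context: the paper does not prove this statement at all --- it is Theorem \ref{theo:Multi-cycle}, quoted from Abouelaoualim et al.\ \cite{Multi-cycles} as a known result --- so there is no in-paper proof to compare your attempt against. Your proposal therefore has to stand on its own, and as written it does not: the decisive step is missing. Your existence step is fine (each $G_i-z$ has $n-1$ vertices and minimum degree at least $\lceil(n+1)/2\rceil-1\geq(n-2)/2$, hence is connected, so Theorem \ref{theo:c-cycle} yields a PC cycle). But everything after that --- the insertion pigeonhole, the ``separate, simpler argument'' for small $k$, and the parity bookkeeping in part (a) --- is announced rather than carried out, and the announced pigeonhole does not close as stated. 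Concretely, take $c=3$ and the most favorable case $k=n-1$, so a single vertex $v$ lies off the longest PC cycle $C$. Writing $m_j$ for the number of colors in which $v$ is joined to $u_j$, your degree count gives $\sum_j m_j\geq 3(k+2)/2$, which only forces about $(k+6)/4$ indices $j$ to have $m_j\geq 2$; nothing in this arithmetic forces two \emph{consecutive} cycle vertices to offer $v$ colors $\alpha\neq\beta$ avoiding $\phi(u_{i-1}u_i)$ and $\phi(u_{i+1}u_{i+2})$ respectively. An adversarial placement of the sets $N_\alpha(v)\cap V(C)$ and of the cycle's edge colors can block every single-vertex insertion, which is exactly why the published proof of this theorem is a lengthy case analysis and not a one-step pigeonhole.

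There is also a structural worry beyond the counting: for properly colored cycles, ``longest cycle plus one-vertex insertion'' is known to be too weak on its own, because the PC condition is not local --- when no insertion point exists one typically has to pass to rotations, to PC paths that are then closed, or to a global reconfiguration of $C$, none of which your sketch provides. In part (a) the same gap recurs: the claim that the count of good positions ``is positive when $n$ is even'' and that the maximum length ``is exactly $n-1$ when $n$ is odd'' is precisely what needs proof. In short, your outline correctly identifies the extremal framework and the role of Theorem \ref{theo:c-cycle} for existence, but the combinatorial core of the theorem is deferred at every point where it matters, so the proposal is not a proof.
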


In this paper we will consider the following edge-coloring. Let $H$ be a graph possibly with loops and $G$ be a graph without loops. An \textit{$H$-coloring} of $G$ is a function $c: E(G) \rightarrow V(H)$. We will say that $G$ is an $H$\textit{-colored graph}, whenever we are taking a fixed $H$-coloring of $G$. A walk $W=(v_0,e_0,v_1,e_1,\ldots,e_{k-1},v_k)$ in $G$, where $e_i=v_i v_{i+1}$ for every $i \in \{0,\ldots,k-1\}$, is an \textit{$H$-walk} iff $(c(e_0),a_0,c(e_1),\ldots,c(e_{k-2}),a_{k-2},c(e_{k-1}))$ is a walk in $H$, with $a_i=c(e_i)c(e_{i+1})$ for every $i \in \{0,\dots,k-2\}$. We will say that $W$ is \textit{closed} if $v_0=v_k$ and $c(e_{k-1})c(e_0) \in E(H)$. Notice that if $H$ is a complete graph without loops, then an $H$-walk is a properly colored walk. And moreover, if $H$ is looped graph with no more edges, then an $H$-walk is a monochromatic walk.


The concepts of $H$-coloring and $H$-walks were introduced, for the first time by Linek and Sands in \cite{Linek}, in the context of kernel theory and related topics, see \cite{H-caminos1,H-caminos2,H-caminos3}. In \cite{H-paseos}, Galeana-S\'{a}nchez, Rojas-Monroy, S\'{a}nchez-L\'{o}pez and Villarreal-Vald\'{e}s gave necessary and sufficient conditions for the existence of closed Euler $H$-trails. In \cite{H-Yeo}, Galeana-S\'{a}nchez, Rojas-Monroy and S\'{a}nchez-L\'{o}pez study the existence of $H$-cycle, in $H$-colored graphs, and extended the Theorem \ref{theo:c-cycle}, in the context of $H$-colored graphs.

Ben\'{i}tez-Bobadilla, Galeana-S\'{a}nchez and Hern\'{a}ndez-Cruz \cite{H-dinamicos} introduced a generalization of $H$-walks as follows. They allowed ``lane changes", i.e., they allowed concatenation of two $H$-walks, say $W_1=(x_0,e_0,x_1,\ldots,x_{k-1},e_{k-1},x_k)$ and $W_2=(y_0,f_0,y_1,\ldots,y_{j-1},f_{j-1},y_j)$, as long as, the last edge of the first one and the first edge of the second one satisfy that $x_{k-1}=y_0$ and $x_k=y_1$ (i.e., the edges $e_0$ and $f_0$ lie between the same end vertices and travel in the same direction that means that star in the same vertex and end in the same vertex). As a result, they defined the following concept: Let $G$ be an $H$-colored multigraph, a \textit{dynamic $H$-walk} in $G$ is a sequence of vertices $W=(x_0,x_1,\ldots,x_k)$ in $G$ such that for each $i \in \{0,\ldots,k-2\}$ there exists an edge $f_i=x_ix_{i+1}$ and there exists an edge $f_{i+1}=x_{i+1}x_{i+2}$ such that $c(f_i)c(f_{i+1})$ is an edge in $H$.

When we deal with a multigraph $G$, we will denote by $E_{uv}^G$ the set of all the edges in $G$ with end vertices $u$ and $v$.

For the purpose of this paper, we need a definition and some more notation that will allow us to know the edges belonging to a dynamic $H$-walk. So, we will say that $W=(v_0,e_0^1, \ldots, e_0^{k_0},v_1,e_1^1,\ldots,$ $e_1^{k_1},v_2,\ldots,v_{n-1},e_{n-1}^1,\ldots,e_{n-1}^{k_{n-1}},v_n)$, where for each $i \in \{0,\ldots, n-1\}$, $k_i \geq 1$ and $e_i^j \in E_{v_iv_{i+1}}^G$ for every $j \in \{1,\ldots, k_i \}$, is a \textit{dynamic $H$-walk} iff $c(e_i^{k_i})c(e_{i+1}^1)$ is an edge in $H$, for each $i \in \{0,\ldots,n-2\}$. We will say that a dynamic $H$-walk is a \textit{closed dynamic $H$-walk} whenever $v_0=v_n$ and $c(e_{n-1}^{k_{n-1}})c(e_0^1)$ is an edge in $H$. Notice that if $W$ is a closed dynamic $H$-walk satisfying that $v_1=v_n$ and $e_{n-1}^{k_{n-1}}$ and $e_0^1$ are parallel in $G$, then $W$ can be rewrite as $W=(v_1,e_1^1,...,v_{n-1}=v_0,e_{n-1}^1,\ldots,e_{n-1}^{k_{n-1}},e_0^1,\ldots, e_0^{k_0},v_n=v_1)$, and $W$ is closed (unless $n=1$, i.e., $W$ is of the form $(v_0,e_0,\ldots,e_k,v_1)$, where $k \geq 1$). If $W$ is a dynamic $H$-walk that does not repeat edges (vertices), then $W$ will be called \textit{dynamic} $H$\textit{-trail} (\textit{dynamic $H$\textit{-path}}). If $W$ is closed and not repeat a vertex, except for the first and the last, then $W$ will be called \textit{dynamic} $H$\textit{-cycle}. 

It follows from the definition of dynamic $H$-walk that every $H$-walk in $G$ is a dynamic $H$-walk in $G$. Moreover, if $G$ has no parallel edges, then every dynamic $H$-walk is an $H$-walk.

A motivation for the study of dynamic $H$-walks in $H$-colored multigraphs are their possible applications. For example, suppose that we are working with a communication network, represented by a graph $G$, where each vertex represent a connection point, and an edge between two connection points means that they have a direct link between them. Moreover, each directed link have failure probability (namely risk; such as, damage, attack, virus, blockage, among many others), this failure probability will be represented by a color assigned to that edge. Now, consider a new graph, say $H$, where each vertex of $H$ is one of the color used in the described coloring of the edges of $G$; and we add an edge in $H$ from one color to another whenever such a colors transition is convenient or possible (for example, if transitions with the same probability of failure are forbidden, then $H$ will have no loops). Notice that in the practice, it is possible to send the same message simultaneously over two or more parallel connections. If it is required to send a message from point $A$ to point $B$ through $G$ with the more convenient form, we need to find a dynamic $H$-walk in $G$ from $A$ to $B$.

In addition, multigraphs can model several applied problems in a more natural way than simple graphs, see \cite{Applied-Multigraphs,Applied-Multigraphs2,Applied-Multigraphs3}.

In this work, we study the existence of dynamic $H$-cycles and dynamic $H$-trails, and the length of dynamic $H$-cycles and dynamic $H$-paths in $H$-colored multigraphs. To accomplish this, we introduce a new concept of color degree, namely, the \textit{dynamic degree}, that allows us to extend some classic results, such as, Ore's Theorem, for $H$-colored multigraphs. Also, we give sufficient conditions for the existence of hamiltonian dynamic $H$-cycles in $H$-colored multigraphs with at most one ``lane change", and as a consequence, we obtain sufficient conditions for the existence of PC hamiltonian cycle in $c$-edge-colored multigraphs, with $c\geq 3$. Moreover, we improve the conditions given in Theorem \ref{theo:Multi-cycle} b) for an infinitely family of multigraphs.

\begin{figure}[ptb]
\centering 
	\scalebox{0.8}{\begin{tikzpicture}
	\tikzset{every loop/.style={min distance=15mm,in=120,out=60,looseness=10}}
	\tikzstyle{vertex}=[circle, draw=black]
	
	\node[](G) at (0,2.5){\huge{$G$}};		
		\node[vertex](v1) at (3,2) {};
		\node[above=1mm of v1](Et.u) {$v_1$};
		\node[vertex](v2) at (1,0) {};
		\node[left=1mm of v2](Et.u) {$v_2$};
		\node[vertex](v3) at (3,-2) {};
		\node[below=1mm of v3](Et.u) {$v_3$};
		\node[vertex](v4) at (5,0) {};
		\node[below=1mm of v4](Et.u) {$v_4$};
		\node[vertex](v5) at (7,2) {};
		\node[above=1mm of v5](Et.u) {$v_5$};
		\node[vertex](v6) at (9,0) {};
		\node[right=1mm of v6](Et.u) {$v_6$};
		\node[vertex](v7) at (7,-2) {};
		\node[below=1mm of v7](Et.u) {$v_7$};
		
		\node[](B1) at (-1,-2){};
		\node[](B2) at (0.5,-2){};
		\node[left=1mm of B1](B3) {$B$};
		\node[](G1) at (-1,-2.5){};
		\node[](G2) at (0.5,-2.5){};
		\node[left=1mm of G1](G3) {$G$};
		\node[](R1) at (-1,-3){};
		\node[](R2) at (0.5,-3){};
		\node[left=1mm of R1](R3) {$R$};
		
		\draw[dashed,blue] (B1) -- (B2);
		\draw[red] (R1) -- (R2);
		\draw[dotted,green] (G1) -- (G2);
		
		\path	(v1)edge [bend right=20,left,blue,very thick,dashed] node {\textcolor{black}{$e_1$}} (v2)
					edge [bend left=20,right,green,very thick,dotted] node {\textcolor{black}{$e_2$}} (v2)
					edge [bend right=20,left,blue,very thick,dashed] node {\textcolor{black}	{$e_3$}} (v4)
					edge [bend left=20, right,red,very thick] node {\textcolor{black}{$e_4$}} (v4)
				(v2)edge [bend right=20,left,green,very thick,dotted] node {\textcolor{black}{$e_5$}} (v3)
					edge [bend left=20,right,red,very thick] node {\textcolor{black}{$e_6$}} (v3)
				(v3)edge [bend right=20,right,blue,very thick,dashed] node {\textcolor{black}	{$e_7$}} (v4)
					edge [bend left=20, left,red,very thick] node {\textcolor{black}{$e_8$}} (v4)
				(v4)edge [bend right=20,right,blue,very thick,dashed] node {\textcolor{black}{$e_9$}} (v5)
					edge [bend left=20,left,blue,very thick,dashed] node {\textcolor{black}{$e_{10}$}} (v5)
					edge [bend right=20,left,green,very thick,dotted] node {\textcolor{black}	{$e_{11}$}} (v7)
					edge [bend left=20, right,red,very thick] node {\textcolor{black}{$e_{12}$}} (v7)
				(v5)edge [bend right=20,left,blue,very thick,dashed] node {\textcolor{black}{$e_{13}$}} (v6)
					edge [bend left=20,right,red,very thick] node {\textcolor{black}{$e_{14}$}} (v6)
				(v6)edge [bend right=20,left,red,very thick] node {\textcolor{black}	{$e_{15}$}} (v7)
					edge [bend left=20, right,red,very thick] node {\textcolor{black}{$e_{16}$}} (v7);
		\node[](H) at (11,2.5){\huge{$H$}};
		\node[vertex,fill=blue](B) at (12.5,1) {};
		\node[below=1mm of B]{$B$};		
		\node[vertex,fill=red](R) at (14.5,0) {};
		\node[below=1mm of R] {$R$};
		\node[vertex,fill=green](G) at (12.5,-1) {};
		\node[below=1mm of G] {$G$};
		
		\draw[] (B) -- (R);
		\draw[] (G) -- (R);	

		\end{tikzpicture}}
\caption{The sequence $P=(v_4,e_9,v_5,e_{14},e_{13},v_6,e_{16},v_7,e_{11},e_{12},v_4)$ is a dynamic $H$-cycle in $G$ and there is no $H$-cycle of length greater than 2 containing $v_5$, $v_6$ or $v_7$}
\label{fig:dynamic_edge}
\end{figure}
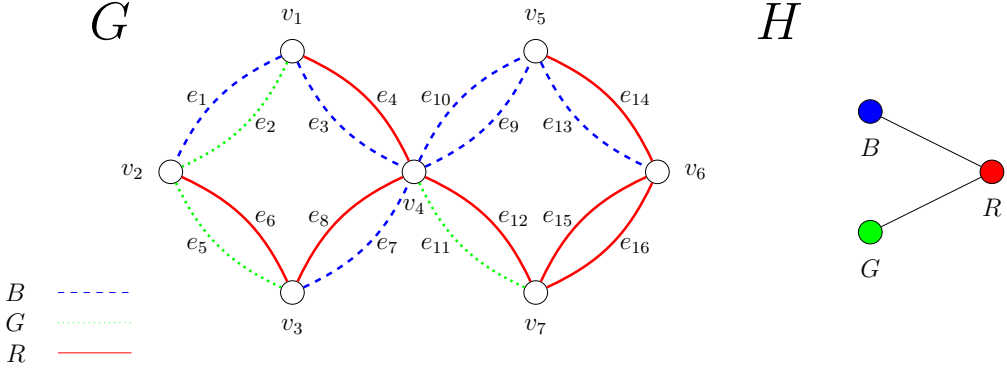

\section{Notation and Terminology}\label{Section:NyT}
Let $G$ be a multigraph. If $e$ is an edge and $u$ and $v$ are the vertices such that $e=uv$, then $e$ is said to \textit{join} $u$ and $v$, we will say that $u$ and $v$ are the ends of $e$, we will say that the edge $e$ is incident with $u$ (respectively $v$) and also we will say that $u$ and $v$ are adjacent. If $u=v$, then the edge $e$ is a loop. The set of all the edges in $G$ with end vertices $u$ and $v$ will be denoted by $E_{uv}^G$, when there is no confusion, for simplicity, we will write $E_{uv}$. Let $e$ and $f$ be two edges in $E_{uv}$, we will say that $e$ and $f$ are parallel. The \textit{neighborhood} of a vertex $u$, denoted by $N_G(u)$, is defined as the set of all the vertices adjacent with $u$ in $G$.

A walk in a multigraph $G$ is a sequence $(v_0, v_1, \ldots , v_k)$, where $v_i v_{i+1} \in E(G)$ for every $i$ in $\{0, \ldots , k-1\}$. We define the length of the walk $W$ as the number $k$, denoted by $l(W)$. We will say that a walk is closed if $v_0 = v_k$. If $v_i \neq v_j$ for all $i$ and $j$ with $i \neq j$, it is called a path. A cycle is a closed walk $(v_0, v_1, \ldots , v_k, v_0)$, with $k \geq 3$, such that $v_i \neq v_j$ for all $i$ and $j$ with $i \neq j$. 

A graph $G$ is said to be multipartite, if for some positive integer $k$, there exists a partition ${X_1, \ldots , X_k }$ of $V(G)$, such that $X_i$ is an independent set in $G$ (that is no two vertices of $X_i$ are adjacent) for every $i$ in $\{1, \ldots , k\}$, in this case, also $G$ is called $k$-partite. It said that $G$ is a complete $k$-partite graph whenever $G$ is $k$-partite and for every $u$ in $X_i$ and for every $v$ in $X_j$ , with $i \neq j$ , we have that $u$ and $v$ are adjacent, denoted by $K_{n_1,\ldots,n_k}$ where $\vert X_i \vert = n_i$ for every $i$ in $\{1, \ldots, k\}$. In the particular case when $k=2$, the graph $G$ is said to be bipartite graph.

Let $G$ be an $H$-colored multigraph, a sequence $W=(v_0,e_0^1, \ldots, e_0^{k_0}, v_1, e_1^1, \ldots,e_1^{k_1},v_2,\ldots,v_{n-1},$ $e_{n-1}^1,\ldots,e_{n-1}^{k_{n-1}},v_n)$ in $G$, where for each $i \in \{0,\ldots, n-1\}$, $k_i \geq 1$ and $e_i^j \in  E_{v_iv_{i+1}}$ for every $j \in \{1,\ldots, k_i \}$, is a \textit{dynamic $H$-walk} in $G$ iff $c(e_i^{k_i})c(e_{i+1}^1)$ is an edge in $H$, for each $i \in \{0, \ldots,n-2\}$. We define length of the dynamic $H$-walk, denoted by $l(W)$, as the number $n$. We will say that the dynamic $H$-walk $W$ has $k_i-1$ \textit{changes} from $v_i$ to $v_{i+1}$, and the number of changes of $W$ is $\sum_{i=1}^{n-1} (k_i-1)$. Notice that if $W$ is a dynamic $H$-walk with zero changes, then $W$ is an $H$-walk. In Figure \ref{fig:dynamic_edge}, $T=(v_1,e_4,v_4,e_{10},v_5,e_{14},e_{13},v_6,e_{15},v_7)$ is a dynamic $H$-trail with one change and $C=(v_4,e_9,v_5,e_{14},e_{13},v_6,e_{15},v_7,e_{11},v_4)$ is a dynamic $H$-cycle with one change.

\section{Main Results}\label{Section:Main}
In what follows $H$ will be a graph possibly with loops, and $G$ will be an $H$-colored multigraph.

We will use an auxiliary graph, denoted by $G_u$, that is defined as follows: Let $G$ be an $H$-colored multigraph and $u$ be a vertex of $G$; $G_u$ is the simple graph such that $V(G_u)=\{e \in E(G): e \textrm{ is incident}\textrm{with } u\}$, and two different vertices $a$ and $b$ are joining by only one edge in $G_u$ if and only if $c(a)$ and $c(b)$ are adjacent in $H$.\vspace{1em}

Let $G$ be an $H$-colored multigraph and $\{u,v\} \subseteq V(G)$. We will say that $E_{uv}$ is a \textit{dynamic edge set} if and only if there exist $\{e,f\} \subseteq E_{uv}$ such that $N_H(c(e)) \neq N_H(c(f))$ and neither of them is subset of the other. The \textit{dynamic degree} of $u$, denoted by $\delta_{dym}(u)$, is the number of vertices $v$ such that $E_{uv}$ is a dynamic edge set. In Figure \ref{fig:dynamic_edge}, the set $E_{v_1v_4}$ is a dynamic edge set but $E_{v_1v_2}$ is not a dynamic edge set, since $N_H(c(e_1)) = N_H(c(e_2))= \{R\}$.\vspace{1em}

\textbf{Observation 1.} If $G_u$ is a complete $k_u$-partite graph, for some $k_u \geq 2$, then $E_{uv}$ is a dynamic edge set if and only if, there exist $e$ and $f$ in $E_{uv}$ in different sets of the partition of $V(G_u)$.

\begin{proposition}\label{proposition:walk}
Let $G$ be an $H$-colored multigraph such that $G_u$ is a complete $k_u$-partite graph, for every $u$ in $V(G)$ and for some $k_u$, $k_u \geq 2$. If $T=(x_0,x_1,\ldots,x_n)$ is a walk in $G$ such that for each $i \in \{0,\ldots,n-1\}$, $E_{x_ix_{i+1}}$ is a dynamic edge set, then there exist $e_i \in E_{x_ix_{i+1}}$, for every $i \in \{0,\ldots, n-1\}$, such that $T'=(x_0,e_0,x_1,\ldots,x_{n-1},e_{n-1},x_n)$ is an $H$-walk. Moreover, if $E_{x_0x_n}$ is a dynamic edge set, then there exist $\{e_n,e_{n+1}\} \subseteq E_{x_0x_n}$ such that $C=(x_0,e_0,x_1,\ldots,x_{n-1},e_{n-1},$ $x_n,e_n,x_0)$ is a closed $H$-walk or $C=(x_0,e_0,x_1,\ldots,x_{n-1},e_{n-1},x_n, e_n,e_{n+1},x_0)$ is a closed dynamic $H$-walk, i.e., there exists a closed dynamic $H$-walk with at most one change.
\end{proposition}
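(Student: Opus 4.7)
The plan is to build the edges of the $H$-walk one by one from left to right, using Observation 1 at each step to ensure the color condition, and then close the walk by picking either one or two parallel edges in $E_{x_0x_n}$.

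To construct $T'$, I would start by picking any $e_0\in E_{x_0x_1}$. Given $e_{i-1}$ for some $1\le i\le n-1$, note that $e_{i-1}$ is incident with $x_i$, so $e_{i-1}\in V(G_{x_i})$ and lies in some class $A$ of the partition of $V(G_{x_i})$. Because $E_{x_ix_{i+1}}$ is a dynamic edge set and $G_{x_i}$ is complete $k_{x_i}$-partite, Observation 1 gives two edges of $E_{x_ix_{i+1}}$ in distinct partition classes of $V(G_{x_i})$; hence at least one such edge, which I take as $e_i$, lies outside $A$. Since $G_{x_i}$ is complete multipartite, vertices in distinct classes are adjacent in $G_{x_i}$, and by definition of $G_{x_i}$ this gives $c(e_{i-1})c(e_i)\in E(H)$. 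Iterating produces $T'=(x_0,e_0,x_1,\ldots,x_{n-1},e_{n-1},x_n)$, an $H$-walk.

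For the closing part, I would run the same selection in $G_{x_n}$: since $E_{x_0x_n}$ is dynamic and $e_{n-1}\in V(G_{x_n})$, Observation 1 yields $e_n\in E_{x_0x_n}$ lying in a partition class of $V(G_{x_n})$ different from that of $e_{n-1}$, giving $c(e_{n-1})c(e_n)\in E(H)$. Applied now in $G_{x_0}$ with $e_0$ playing the role of the "previous" edge, it yields $e_{n+1}\in E_{x_0x_n}$ in a class of $V(G_{x_0})$ different from that of $e_0$, giving $c(e_{n+1})c(e_0)\in E(H)$. If some single edge of $E_{x_0x_n}$ satisfies both conditions simultaneously I may take $e_n=e_{n+1}$, in which case $(x_0,e_0,\ldots,e_{n-1},x_n,e_n,x_0)$ is a closed $H$-walk; otherwise $e_n\neq e_{n+1}$, and $(x_0,e_0,\ldots,e_{n-1},x_n,e_n,e_{n+1},x_0)$ is a closed dynamic $H$-walk whose only change is the lane change between the parallel edges $e_n$ and $e_{n+1}$ of $E_{x_0x_n}$.

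I do not foresee a hard step: the whole argument is a greedy construction driven by Observation 1, together with the elementary fact that in a complete multipartite graph the constraint "avoid one specified partition class" leaves at least one valid choice whenever the pool of candidate edges already spans two classes. The only point worth underlining is that, for parallel edges in $E_{uv}$, being in the same partition class of $V(G_u)$ is equivalent to being in the same class of $V(G_v)$, because in either graph this relation is determined solely by the colors via adjacency in $H$; this lets Observation 1 be invoked at whichever endpoint is convenient, as is done in the closing step.
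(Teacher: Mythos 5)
Your proposal is correct and follows essentially the same route as the paper: a greedy left-to-right selection of $e_i$ using Observation 1 (at each internal vertex the two witnesses of the dynamic edge set lie in distinct partition classes of $V(G_{x_i})$, so at least one is adjacent to $e_{i-1}$), followed by closing with one edge of $E_{x_0x_n}$ adjacent to $e_{n-1}$ in $G_{x_n}$ and, if that edge fails to be adjacent to $e_0$ in $G_{x_0}$, a second parallel edge that is. The only cosmetic difference is that the paper selects $e_{n+1}$ as an edge in a class of $V(G_{x_0})$ different from that of $e_n$ (which then must be adjacent to $e_0$ since $e_n$ and $e_0$ share a class), whereas you select it directly as an edge adjacent to $e_0$; both are immediate from Observation 1.
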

\begin{proof}
Suppose that $G$ is an $H$-colored multigraph such that $G_u$ is a complete $k_u$-partite graph, for every $u$ in $V(G)$ and for some $k_u \geq 2$. Let $T=(x_0,x_1,\ldots,x_n)$ be a walk in $G$ such that $E_{x_ix_{i+1}}$ is a dynamic edge set.

Consider the edge $e_0=x_0x_1$ in $E(G)$, since $E_{x_1x_2}$ is a dynamic edge set, by Observation 1, we have that there exist $f_1=x_1x_2$ and $f_2=x_1x_2$ in different sets of the partition of $V(G_{x_1})$. It follows from the fact that $e_0 \in V(G_{x_1})$ and $G_{x_1}$ is a complete $k_{x_1}$-partite graph, that $e_0f_1 \in E(G_{x_1})$ or $e_0f_2 \in E(G_{x_1})$. And then $e_1$ will be the edge such that $e_0e_1 \in E(G_{x_1})$, i.e., $e_1=f_1$ or $e_1=f_2$ (in case that both edges are adjacent to $e_0$, we take $e_1=f_1$).

Since $E_{x_2x_3}$ is a dynamic edge set, by Observation 1, we have that there exist $g_1=x_2x_3$ and $g_2=x_2x_3$ in different sets of the partition of $V(G_{x_2})$. It follows from the fact that $e_1 \in V(G_{x_2})$ and $G_{x_2}$ is a complete $k_{x_2}$-partite graph, that $e_1g_1 \in E(G_{x_2})$ or $e_1g_2 \in E(G_{x_2})$. And then $e_2$ will be the edge such that $e_1e_2 \in E(G_{x_2})$, i.e., $e_2=g_1$ or $e_2=g_2$ (in case that both edges are adjacent to $e_1$, we take $e_2=g_1$). Repeating this procedure, we have that for every $i \in \{0,\ldots,n-1\}$, there exist $e_i \in E_{x_ix_{i+1}}$ such that $T'=(x_0,e_0,x_1,e_1,x_2,\ldots,x_{n-1},e_{n-1},x_n)$ is an $H$-walk.

Now, suppose that $E_{x_0x_n}$ is a dynamic edge set, since $G_{x_n}$ is a complete $k_{x_n}$-partite graph, then there exists $e_n=x_nx_0$ such that $e_{n-1}e_n \in E(G_{x_n})$. 

If $e_ne_0 \in E(G_{x_0})$, then $C=(x_0,e_0,x_1,\ldots,x_n,e_n,x_0)$ is a closed $H$-walk. 

Otherwise, $e_ne_0 \not\in E(G_{x_0})$. And since $E_{x_nx_0}$ is a dynamic edge set, then there exist an edge $e_{n+1}=x_nx_0$ such that $e_n$ and $e_{n+1}$ are in different sets of the partition of $V(G_{x_0})$. Now, since $G_{x_0}$ is a complete $k_{x_0}$-partite graph and $e_ne_0 \not\in E(G_{x_0})$, then $e_{n+1}e_0 \in E(G_{x_0})$. Therefore, $C=(x_0,e_0,x_1,T',x_n,e_n,e_{n+1},x_0)$ is a closed dynamic $H$-walk.
\end{proof}

\begin{corollary}\label{lemma:tray-ciclo}
Let $G$ be an $H$-colored multigraph such that $G_u$ is a complete $k_u$-partite graph, for every $u$ in $V(G)$ and for some $k_u$, $k_u \geq 2$. If $T=(x_0,x_1,\ldots,x_n)$ is a path in $G$ such that for every $i \in \{0,\ldots,n-1\}$, $E_{x_ix_{i+1}}$ is a dynamic edge set, then there exist $e_i \in E_{x_ix_{i+1}}$, for every $i \in \{0,\ldots, n-1\}$, such that $T'=(x_0,e_0,x_1,\ldots,x_{n-1},e_{n-1},x_n)$ is an $H$-path. Moreover, if $E_{x_0x_n}$ is a dynamic edge set, then there exist $\{e_n,e_{n+1}\} \subseteq E_{x_0x_n}$ such that $C=(x_0,e_0,x_1,\ldots,x_{n-1},e_{n-1},x_n,e_n,x_0)$ is an $H$-cycle or $C=(x_0,e_0,x_1,\ldots,x_{n-1},e_{n-1},x_n, e_n,e_{n+1},x_0)$ is a dynamic $H$-cycle, i.e., there exists a dynamic $H$-cycle with at most one change.
\end{corollary}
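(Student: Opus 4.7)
The plan is to deduce this corollary directly from Proposition \ref{proposition:walk}, noting that the two statements differ only in ``walk'' being strengthened to ``path'' (and, in the second clause, ``closed (dynamic) $H$-walk'' being strengthened to ``(dynamic) $H$-cycle''). Since every path is in particular a walk, my first step will be to invoke the proposition to obtain edges $e_i \in E_{x_ix_{i+1}}$, for every $i \in \{0,\ldots,n-1\}$, such that $T'=(x_0,e_0,x_1,\ldots,x_{n-1},e_{n-1},x_n)$ is an $H$-walk.

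Next I would observe that the vertex sequence of $T'$ is exactly $x_0,x_1,\ldots,x_n$, i.e., the vertex sequence of $T$. Because $T$ is a path, these vertices are pairwise distinct, so $T'$ is an $H$-walk that repeats no vertex; that is, $T'$ is an $H$-path. This handles the first assertion.

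For the ``moreover'' part, assuming that $E_{x_0x_n}$ is a dynamic edge set, I would apply Proposition \ref{proposition:walk} once more to obtain either a closed $H$-walk $C=(x_0,e_0,x_1,\ldots,x_{n-1},e_{n-1},x_n,e_n,x_0)$ or a closed dynamic $H$-walk $C=(x_0,e_0,x_1,\ldots,x_{n-1},e_{n-1},x_n,e_n,e_{n+1},x_0)$ with at most one change. In both cases the sequence of vertices visited by $C$ is $x_0,x_1,\ldots,x_n,x_0$, whose non-repeated entries $x_0,\ldots,x_n$ are pairwise distinct since $T$ is a path. Hence $C$ is closed and repeats no vertex other than the first and the last, yielding an $H$-cycle in the first case and a dynamic $H$-cycle with at most one change in the second. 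I do not anticipate any real obstacle here: the entire argument amounts to the observation that the edges produced by Proposition \ref{proposition:walk} lie over the very same vertex sequence as $T$, so the ``no repeated vertex'' property transfers automatically from $T$ to $T'$ and $C$.
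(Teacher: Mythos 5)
Your proposal is correct and is essentially the paper's own argument: the paper states this corollary without proof precisely because it follows immediately from Proposition \ref{proposition:walk} by observing that the edges produced there lie over the same vertex sequence as $T$, so distinctness of vertices transfers from the path $T$ to $T'$ and to $C$. Nothing is missing.
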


\begin{theorem}\label{theorem:dynamic-cycle}
Let $G$ be an $H$-colored multigraph such that $G_u$ a complete $k_u$-partite graph, for every $u$ in $V(G)$ and for some $k_u$, $k_u \geq 2$. If $\delta_{dym}(u) \geq d \geq 2$, for every $u \in V(G)$, then $G$ has a dynamic $H$-cycle of length at least $d+1$ and with at most one change.
\end{theorem}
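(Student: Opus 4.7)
The plan is to reduce the problem to Corollary \ref{lemma:tray-ciclo} via a standard auxiliary simple graph. Let $D$ be the simple graph on vertex set $V(G)$ in which $\{u,v\}$ is an edge precisely when $E_{uv}$ is a dynamic edge set. By the very definition of $\delta_{dym}$, the degree of every vertex $u$ in $D$ equals $\delta_{dym}(u)$, so the hypothesis becomes $\delta(D) \geq d \geq 2$.

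Next I would invoke the classical Dirac-type observation that any finite simple graph with minimum degree at least $d \geq 2$ contains a cycle of length at least $d+1$; the standard longest-path argument suffices. Concretely, take a longest path $P = y_0 y_1 \cdots y_k$ in $D$. Maximality of $P$ forces every neighbor of $y_0$ in $D$ to lie on $P$, so if $y_j$ denotes the neighbor of $y_0$ on $P$ with the largest index, then $j \geq d$. Hence $(y_0, y_1, \ldots, y_j, y_0)$ is a cycle in $D$ visiting $j+1 \geq d+1$ pairwise distinct vertices of $G$.

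Now I would lift this cycle in $D$ back to $G$. Since $\{y_i, y_{i+1}\} \in E(D)$ forces $E_{y_i y_{i+1}}$ to be a (nonempty) dynamic edge set, the sequence $(y_0, y_1, \ldots, y_j)$ is a path in $G$ along which each $E_{y_i y_{i+1}}$, $i \in \{0, \ldots, j-1\}$, is a dynamic edge set, and moreover $E_{y_j y_0}$ is a dynamic edge set as well. Corollary \ref{lemma:tray-ciclo} then produces a dynamic $H$-cycle through $y_0, y_1, \ldots, y_j$ with at most one change, whose length in the $n$-convention of the paper equals $j+1 \geq d+1$, as required.

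I do not expect any genuine combinatorial obstacle: once $D$ is introduced, the argument is just a textbook longest-path lemma plugged into Corollary \ref{lemma:tray-ciclo}. The only point deserving careful bookkeeping is the length convention, to make sure the dynamic $H$-cycle produced by the corollary is counted with length equal to its number of distinct vertices (which is $j+1$), and not off by one.
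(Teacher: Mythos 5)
Your proposal is correct and follows essentially the same route as the paper: a longest path whose consecutive pairs span dynamic edge sets, the maximality argument forcing the farthest dynamic neighbor $y_j$ of the endpoint to satisfy $j\geq d$, and then Corollary \ref{lemma:tray-ciclo} to lift the resulting cycle to a dynamic $H$-cycle with at most one change. Phrasing the first step inside the auxiliary graph $D$ (which is exactly the graph $G_{dym}$ the paper introduces later) is only a cosmetic repackaging of the same argument.
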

\begin{proof}
Let $T=(x_0,x_1,\ldots,x_k)$ be a path of maximum length such that $E_{x_ix_{i+1}}$ is a dynamic edge set, for every $i \in \{0,\ldots,k-1\}$.

Claim 1. $T$ has length at least $d$.

Suppose that $T$ has length at most $d-1$. Since $\delta_{dym}(x_k) \geq d$, then there exists a vertex $x_{k+1} \in V(G)\setminus V(T)$ such that $E_{x_kx_{k+1}}$ is a dynamic edge set. Hence, $T'=(x_0,x_1,\ldots,x_k,x_{k+1})$ is a path of length $k+1$ such that $E_{x_ix_{i+1}}$ is a dynamic edge set, for every $i \in \{0,\ldots,k\}$, contradiction. Therefore, $T$ has length at least $d$.

Since $T$ is of maximum length, if $E_{x_0u}$ is a dynamic edge set, then $u \in V(T)$ (otherwise we can extend $T$). Let $j= max\{i \; | \; E_{x_0x_i}$ is a dynamic edge set$\}$. Since $\delta_{dym}(x_0) \geq d$, we have that $j \geq d$. Therefore, $C'=(x_0,x_1,\ldots,x_j,x_0)$ is a cycle such that $E_{x_ix_{i+1}}$ is a dynamic edge set, for every $i \in \{0, \ldots, j\}$, hence by Corollary \ref{lemma:tray-ciclo}, we have that there exist $C$ a dynamic $H$-cycle of length $j+1 \geq d+1$ with at most one change.
\end{proof}

Let $K_n^2$ be a complete multigraph with $|E_{uv}|=2$, for every $\{u,v\} \in V(K_n^2)$. And, let $H$ be a complete simple graph with $k\geq 2$ vertices, and $G$ the union of two $K_n^2$ that share a unique vertex. If we $H$-color $G$ in such a way that every pair of parallel edges has different color, then $\delta_{dym}(x)\geq n-1$, for every $x \in V(G)$, and the length of the maximum dynamic $H$-cycle in $G$ is $n$. So, we cannot improve the length of the dynamic $H$-cycle in Theorem $\ref{theorem:dynamic-cycle}$.

\begin{corollary}
Let $G$ be an $H$-colored complete multigraph such that $G_u$ is a complete $k_u$-partite graph, for every $u \in V(G)$ and for some $k_u$, $k_u \geq 2$. If $E_{xy}$ is a dynamic edge set, for every $\{x,y\} \subseteq V(G)$, then $G$ has a hamiltonian dynamic $H$-cycle. 
\end{corollary}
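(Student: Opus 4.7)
The corollary is essentially an immediate instance of Corollary \ref{lemma:tray-ciclo}. The idea is to start from \emph{any} hamiltonian ordering of $V(G)$ in the underlying simple graph (which is a complete graph because $G$ is a complete multigraph), and then feed that hamiltonian path to Corollary \ref{lemma:tray-ciclo}.

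More precisely, I would first enumerate $V(G)=\{x_0,x_1,\ldots,x_{n-1}\}$ in any order. Because $G$ is a complete multigraph, for each $i\in\{0,\ldots,n-2\}$ the set $E_{x_ix_{i+1}}$ is nonempty, so $P=(x_0,x_1,\ldots,x_{n-1})$ is a path in $G$. By hypothesis, for every pair $\{x,y\}\subseteq V(G)$ the set $E_{xy}$ is a dynamic edge set; in particular $E_{x_ix_{i+1}}$ is a dynamic edge set for each $i\in\{0,\ldots,n-2\}$, and also $E_{x_0x_{n-1}}$ is a dynamic edge set. Since the other hypothesis of Corollary \ref{lemma:tray-ciclo} (namely, that $G_u$ is a complete $k_u$-partite graph for every $u\in V(G)$ with $k_u\geq 2$) is inherited directly from the statement, all the hypotheses needed to apply Corollary \ref{lemma:tray-ciclo} to the path $P$ are in place.

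Applying Corollary \ref{lemma:tray-ciclo} then produces edges $e_i\in E_{x_ix_{i+1}}$ for $i\in\{0,\ldots,n-2\}$ together with either one edge $e_{n-1}\in E_{x_0x_{n-1}}$ giving an $H$-cycle, or two edges $e_{n-1},e_n\in E_{x_0x_{n-1}}$ giving a dynamic $H$-cycle with one change. Either way, the resulting cycle passes through every vertex of $G$, hence is a hamiltonian dynamic $H$-cycle, finishing the proof.

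I do not foresee any real obstacle: the only thing to check is that the hypotheses of Corollary \ref{lemma:tray-ciclo} are met, which follows at once from $G$ being complete and from the assumption that every $E_{xy}$ is dynamic. The slight bookkeeping point worth stating explicitly is the guarantee that the underlying sequence $(x_0,\ldots,x_{n-1})$ is really a path in $G$ (no repeated vertices and consecutive ones joined by an edge), which is immediate from completeness and from our choosing an ordering of the distinct vertices of $G$.
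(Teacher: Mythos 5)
Your proof is correct and rests on essentially the same machinery as the paper: the corollary is stated there as an immediate consequence of Theorem \ref{theorem:dynamic-cycle} (since every $E_{xy}$ being a dynamic edge set gives $\delta_{dym}(u)=n-1$ for all $u$, hence a dynamic $H$-cycle of length at least $n$), and that theorem's proof itself reduces to Corollary \ref{lemma:tray-ciclo}. Your shortcut of feeding an arbitrary hamiltonian ordering of the complete multigraph directly into Corollary \ref{lemma:tray-ciclo} is valid and even slightly more direct, yielding in addition that the cycle has at most one change.
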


\begin{theorem}
Let $G$ be an $H$-colored multigraph such that $G_u$ a complete $k_u$-partite graph, for every $u$ in $V(G)$ and for some $k_u$, $k_u \geq 3$. If $\delta_{dym}(u) \geq d \geq 2$, for every $u \in V(G)$, then $G$ has an $H$-path of length at least $\textrm{min}\{2d,n\}$, or $G$ has an $H$-cycle of length at least $d+1$.
\end{theorem}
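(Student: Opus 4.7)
The plan is to mimic the proof of Theorem \ref{theorem:dynamic-cycle}, strengthened by an Ore/Dirac-style pigeonhole counting and the extra flexibility provided by the hypothesis $k_u \geq 3$. Let $T = (x_0, x_1, \ldots, x_k)$ be a path in $G$ of maximum length such that $E_{x_i x_{i+1}}$ is a dynamic edge set for every $i \in \{0, \ldots, k-1\}$. The same maximality argument as in the proof of Theorem \ref{theorem:dynamic-cycle}, combined with $\delta_{dym}(x_k) \geq d$, immediately gives $k \geq d$. If moreover $k \geq 2d$, then Corollary \ref{lemma:tray-ciclo} upgrades $T$ to an $H$-path of length at least $2d \geq \min\{2d, n\}$, establishing the first alternative of the conclusion.

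Assume henceforth $k < 2d$. Maximality of $T$ forces every vertex $v$ such that $E_{x_0 v}$ (respectively $E_{x_k v}$) is a dynamic edge set to lie on $T$. Setting
\[
I_0 = \{i \in \{1, \ldots, k\} : E_{x_0 x_i} \text{ is a dynamic edge set}\}, \quad
I_k = \{i \in \{0, \ldots, k-1\} : E_{x_k x_i} \text{ is a dynamic edge set}\},
\]
we have $|I_0|, |I_k| \geq d$. Since $I_0 \cup (I_k+1) \subseteq \{1, \ldots, k\}$ and $|I_0| + |I_k+1| \geq 2d > k$, there is some $i \in I_0 \cap (I_k+1)$; the rotation
\[
C = (x_0, x_1, \ldots, x_{i-1}, x_k, x_{k-1}, \ldots, x_i, x_0)
\]
is a closed sequence of $k+1 \geq d+1$ vertices in which every consecutive edge set is dynamic.

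The remaining step is to realize $C$ as an actual $H$-cycle. Corollary \ref{lemma:tray-ciclo} alone provides a dynamic $H$-cycle along $C$ with at most one change; if no change is required, we have the desired $H$-cycle of length $k+1 \geq d+1$. Otherwise, tracing the greedy procedure of Proposition \ref{proposition:walk}, the obstacle appears at the closing step, where the closing edge must lie in parts of two different local graphs $V(G_u)$, $V(G_{u'})$ distinct from those of two previously selected edges. Under $k_u \geq 3$, each local graph contains a third part in reserve; re-selecting one of the constraining edges (either the first edge $e_0 \in E_{x_0 x_1}$ or the penultimate edge of $C$) into this reserve part, using the dynamicity of the corresponding edge set, eliminates the conflict and produces an $H$-cycle of length $k+1 \geq d+1$. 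I expect the main obstacle to lie precisely in this last upgrading step: a re-selection may cascade along the greedy chain, and one must verify that the third part guaranteed by $k_u \geq 3$ is systematically enough to absorb every local color conflict without re-introducing one elsewhere, in the spirit of the pivoting arguments already appearing in Proposition \ref{proposition:walk} and Corollary \ref{lemma:tray-ciclo}.
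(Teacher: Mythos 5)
Your reduction to the cycle case is fine: the maximality argument gives $k\geq d$, the case $k\geq 2d$ follows from Corollary \ref{lemma:tray-ciclo}, and the crossing-chords count $|I_0|+|I_k+1|\geq 2d>k$ correctly produces a cycle $C$ on all of $V(T)$ in which every consecutive edge set is dynamic. The gap is exactly where you say you expect it, and it is not a technicality that can be waved away: Corollary \ref{lemma:tray-ciclo} only yields a dynamic $H$-cycle with one change, and your proposed repair --- ``each local graph contains a third part in reserve, so re-select one constraining edge into it'' --- does not work. The hypothesis $k_u\geq 3$ guarantees a third part of $V(G_u)$, but that part need not contain any edge of the particular parallel class $E_{uv}$ from which you must re-select; a dynamic edge set only promises two edges in two different parts. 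Concretely, if every edge set $E_{w_iw_{i+1}}$ around $C$ meets only two parts, the greedy selection reduces to properly $2$-colouring the edge sequence of a cycle, which fails for parity reasons when $C$ is odd, and no amount of local pivoting inside those edge sets can fix it. So the cascade you worry about is a genuine global obstruction, not something $k_u\geq 3$ absorbs ``systematically''.

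The paper circumvents this by using $k_{u}\geq 3$ \emph{before} choosing the path, not after. For a vertex $x$ it fixes a gadget of three pairwise-adjacent edges of $G_x$: two parallel edges $e_x,f_x\in E_{xv_x}$ lying in different parts together with a third edge $g_x=xy_x$ in yet another part, and then takes the longest path $T$ that is forced to contain $g_x$ and $e_x$ (allowing the single edge set $E_{xy_x}$ to be non-dynamic). The rotation is then performed so that the resulting cycle still traverses $y_x$--$x$--$v_x$ through the gadget; at the closing vertex $v_x$ the two parallel edges $e_x,f_x$ sit in different parts of $G_{v_x}$, so one of them is compatible with the edge arriving at $v_x$, while both are compatible with $g_x$ at $x$. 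That planted flexibility is what produces an honest $H$-cycle of length at least $d+1$, and it is the content your argument is missing. To salvage your approach you would need to either prove a strengthened version of Corollary \ref{lemma:tray-ciclo} under an additional hypothesis on the cycle (which, as the parity example shows, cannot hold for arbitrary cycles with all-dynamic edge sets), or rework the choice of $T$ so that the cycle obtained from the rotation is guaranteed to contain such a gadget.
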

\begin{proof}
Suppose that $G$ is an $H$-colored multigraph such that for every $u \in V(G)$, we have that $\delta_{dym}(u) \geq d \geq 2$, and $G_u$ is a complete $k_u$-partite graph, for some $k_u \geq 3$.

Then, for every $x \in V(G)$, there exist $e_x,f_x,g_x \in E(G)$ such that $\{e_x,f_x\} \subseteq E_{xv_x}$, for some $v_x \in V(G)$, $g_x=xy_x \not\in E_{xv_x}$ and $e_x,f_x$ and $g_x$ are in different parts of the partition of $G_x$ (it possible by Observation 1 and the fact that $\delta_{dym}(x) \geq 2$).

Let $T=(u_0,u_1,\ldots,u_{j-1}=v_x, u_j=x, u_{j+1}=y_x, \ldots, u_k)$ the longest path such that $\{g_x, e_x\} \subseteq E(T)$ and $E_{u_iu_{i+1}}$ is a dynamic edge set, for every $i \in \{0,\ldots,j-1,j+1,\ldots,k-1\}$, i.e., $E_{xy_x}$ is the only not necessarily a dynamic edge set in $T$.

Notice that $l(T)$ is at least $d$ since $\delta_{dym}(u_0)\geq d$ and $T$ is of maximum length path, hence $k=l(W) \geq d$.

If $k \geq 2d$, since $E_{u_{j-2}u_{j-1}}$ is a dynamic edge set, then there is an edge $e_{j-2} \in E_{u_{j-2}u_{j-1}}$ such that $c(e_{j-2})c(g_x) \in E(H)$. So, by following the same procedure as in the proof of Proposition \ref{proposition:walk}, we can construct the following: 1) An $H$-path from $u_{j-1}$ to $u_0$ starting with the edge $e_{j-2}$, say $T_0=(u_{j-1},e_{j-2},u_{j-2},\ldots,u_1,e_0,u_0)$; and 2) an $H$-path from $u_j$ to $u_k$ starting with the edge $e_x$, say $T_1=(u_j,e_x,u_{j+1},\ldots,u_{k-1},e_k,u_k)$. 

Hence, $T'=(u_0,T_0^{-1},u_{j-1},g_x,u_j,T_1,u_k)$ is an $H$-path of length $k \geq 2d$.

So, suppose that $k \leq 2d-1$.

\textbf{Case 1.} $j+1 \leq d$.

Notice that if $E_{u_1v}$ is a dynamic edge set, then $v \in V(T)$. Otherwise, $T'=(v,u_0,u_1,\ldots,u_k)$ is a path of length $k+1$ such that $E_{v_iv_{i+1}}$ is a dynamic edge set, for every $i \in \{0,\ldots,j-1,j+1,\ldots,k-1\}$, contradicting the choice of $T$. Therefore, $v \in V(T)$.

On the other hand, since $\delta_{dym}(u_0) \geq d$, then there is $u_p \in V(T)$, where $d \leq p \leq k$ such that $E_{u_0u_p}$ is a dynamic edge set. Hence, $C=(u_0,u_1,\ldots,u_j, u_{j+1},\ldots,u_p,u_0)$ is a cycle in $G$ such that $E_{u_iu_{i+1}}$ is a dynamic edge set, for every $i \in \{0,\ldots,j-1,j+1,\ldots,p-1\}$.

Since $C$ is a cycle, we can rewrite it as follows $C=(u_j=x,u_{j-1}=y_x,u_{j-2},\ldots,u_0,u_p,$ $u_{p-1}, \ldots,u_{j+1}=v_x,u_j=x)$. Since $E_{u_{j-2}u_{j-1}}$ is a dynamic edge set, there is an edge $e_{j-2} \in E_{u_{j-2}u_{j-1}}$ such that $c(e_{j-2})c(g_x) \in E(H)$. Hence, by Corollary \ref{lemma:tray-ciclo}, there is an $H$-path $T'=(x,g_x,y_x,e_{j-2},u_{j-2},\ldots,u_{j+2},e_{j+2},u_{j+1}=v_x)$.

Since $e_x$ and $f_x$ are incident with $v_x$, then $e_{j+2}$, $e_x$ and $f_x$ are vertices of $G_{v_x}$. We know that $e_x$ and $f_x$ are in different parts of the partition, then $e_{j+2}e_x \in E(G_{v_x})$ or $e_{j+2}f_x \in E(G_{v_x})$. Hence, $C'=(x,g_x,y_x,e_{j-2},u_{j-3},\ldots, u_{j+2},e_{j+2},$ $v_x,e_x,x)$ or $C'=(x,g_x,y_x,e_{j-2},u_{j-3},\ldots, u_{j+2},e_{j+2},v_x,$ $f_x,x)$ is an $H$-cycle (because, pairwise $e_x,f_x$ and $g_x$ are in different parts of the partition of $G_x$) and $C'$ is of length at least $d+1$.

\textbf{Case 2.} $j+1 > d$.

Notice that if $E_{u_kv}$ is a dynamic edge set, then $v \in V(T)$. Otherwise, $T'=(u_0,u_1,\ldots,u_k,$ $u_{k+1}=v)$ is a path of length $k+1$ such that $E_{v_iv_{i+1}}$ is a dynamic edge set, for every $i \in \{0,\ldots,j-1,j+1,\ldots,k\}$, contradicting the choice of $T$. Therefore, $v \in V(T)$.

On the other hand, since $\delta_{dym}(u_k) \geq d$, then there is $u_p \in V(T)$, where $p \leq 2d-d-1 = d-1$, such that $E_{u_ku_p}$ is a dynamic edge set. Hence, $C=(u_p,u_{p+1},\ldots,u_{j-1},u_j, u_{j+1},\ldots,u_k,u_p)$ is a cycle in $G$ such that $E_{u_iu_{i+1}}$ is a dynamic edge set, for every $i \in \{p,\ldots,k-1\}\setminus\{j\}$.

Since $C$ is a cycle, we can rewrite it as follows $C=(u_j=x,u_{j-1}=y_x,u_{j-2},\ldots,u_p,u_k,$ $u_{k-1}, \ldots,u_{j+1}=v_x,u_j=x)$. Since $E_{u_{j-2}u_{j-1}}$ is a dynamic edge set, there is an edge $e_{j-2} \in E_{u_{j-2}u_{j-1}}$ such that $c(e_{j-2})c(g_x) \in E(H)$. Hence, by Corollary \ref{lemma:tray-ciclo}, there is an $H$-path $T'=(x,g_x,y_x,e_{j-2},u_{j-2},\ldots,u_p,e_k,u_k,e_{k-1},u_{k-1},\ldots,u_{j+2},e_{j+2},u_{j+1}=v_x)$.

Since $e_x$ and $f_x$ are incident with $v_x$, then $e_{j+2}$, $e_x$ and $f_x$ are vertices of $G_{v_x}$. We know that $e_x$ and $f_x$ are in different parts of the partition, then $e_{j+2}e_x \in E(G_{v_x})$ or $e_{j+2}f_x \in E(G_{v_x})$. Hence, $C'=(x,g_x,y,e_{j-2},u_{j-3},T',$ $ u_{j+2}, e_{j+2}, v_x, e_x, x)$ or $C'=(x,g_x,y,e_{j-2},u_{j-3},T', u_{j+2},e_{j+2},v_x,$ $f_x,x)$ is an $H$-cycle (because, pairwise $e_x,f_x$ and $g_x$ are in different parts of the partition of $G_x$) and $C'$ is of length at least $d+1$.
\end{proof}

Let $G$ be an $H$-colored multigraph. We will say that the \textit{dynamic graph of} $G$, denoted by $G_{dym}$, is the simple graph such that $V(G_{dym})=V(G)$ and two different vertices $u$ and $v$ are adjacent, with only one edge, in $G_{dym}$ if and only if $E_{uv}$ is a dynamic edge set in $G$.

\begin{theorem}\label{theo:Dynamic Euler}
Let $G$ be an $H$-colored multigraph such that $G_u$ a complete $k_u$-partite graph, for every $u$ in $V(G)$ and for some $k_u$, $k_u \geq 2$. If $G_{dym}$ is connected and $\delta_{dym}(x)= 2p_x$, where $p_x \geq 1$, then $G$ has a spanning closed dynamic $H$-trail with at most one change.  
\end{theorem}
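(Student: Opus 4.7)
The plan is to lift an Eulerian circuit of $G_{dym}$ to $G$ via Proposition~\ref{proposition:walk}. First I would observe that $G_{dym}$ is simple with $V(G_{dym})=V(G)$, and the hypothesis $\delta_{dym}(x)=2p_x$ with $p_x\geq 1$ says that every vertex of $G_{dym}$ has positive even degree. Together with the connectedness of $G_{dym}$, Euler's theorem supplies an Eulerian circuit $C_{dym}=(x_0,x_1,\ldots,x_{n-1},x_n=x_0)$ using every edge of $G_{dym}$ exactly once; in particular, since no vertex is isolated, every vertex of $V(G)$ appears as some $x_i$.

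Next I would apply Proposition~\ref{proposition:walk} to the walk $W=(x_0,x_1,\ldots,x_{n-1})$ in $G$. Each pair $\{x_i,x_{i+1}\}$ for $i\in\{0,\ldots,n-2\}$ is an edge of $G_{dym}$, so $E_{x_ix_{i+1}}$ is a dynamic edge set, and $E_{x_0x_{n-1}}$ is likewise dynamic because $\{x_{n-1},x_0\}$ is the last edge of $C_{dym}$. The proposition therefore yields edges $e_i\in E_{x_ix_{i+1}}$ for $i=0,\ldots,n-2$, together with either a single closing edge or a closing pair drawn from $E_{x_0x_{n-1}}$, producing a closed dynamic $H$-walk $\mathcal{C}$ with at most one change. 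It spans $V(G)$ because $C_{dym}$ already does.

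It remains to argue that $\mathcal{C}$ is a trail. Because $G_{dym}$ is simple and $C_{dym}$ is Eulerian, the pairs $\{x_i,x_{i+1}\}$ for $i=0,\ldots,n-1$ are pairwise distinct, so the multigraph edge sets $E_{x_0x_1},\ldots,E_{x_{n-2}x_{n-1}},E_{x_{n-1}x_0}$ are pairwise disjoint in $E(G)$. Consequently, the selected edges $e_0,\ldots,e_{n-2}$ are distinct from one another and from the closing edge(s), and the two closing edges in the one-change case are placed in different parts of the partition of $V(G_{x_0})$ by the construction in Proposition~\ref{proposition:walk}, hence are themselves distinct. The only real subtlety is precisely this trail verification: although the vertex sequence of $C_{dym}$ has repetitions, no edge of $G$ is reused in the lift because simplicity of $G_{dym}$ forces each dynamic pair of vertices to appear at most once in $C_{dym}$. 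Beyond this observation, the proof is a direct combination of Euler's theorem with Proposition~\ref{proposition:walk}.
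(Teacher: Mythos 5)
Your proof is correct and follows essentially the same route as the paper: take a closed Euler trail of $G_{dym}$ (guaranteed by connectedness and the even positive degrees) and lift it to $G$ via Proposition~\ref{proposition:walk}. Your explicit check that the lifted walk is actually a trail --- the simplicity of $G_{dym}$ makes the sets $E_{x_ix_{i+1}}$ pairwise disjoint, and the two closing edges in the one-change case lie in different parts of the partition of $V(G_{x_0})$ and so are distinct --- is a detail the paper leaves implicit, and it is a worthwhile addition.
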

\begin{proof}
Suppose that $G_{dym}$ is connected and $\delta_{dym}(x)= 2p_x$, where $p_x \geq 1$, then we have that $G_{dym}$ has a closed Euler trail, say $T=(x_0,x_1,\ldots,x_n,x_0)$.

Then, $T$ is a spanning closed dynamic $H$-trail in $G$ such that $E_{x_i x_{i+1}}$ is a dynamic edge set, for every $i \in \{0,1,2,\ldots,n\}$ (when $i=n$, then $x_{i+1}=x_0$). Therefore, by Proposition \ref{proposition:walk}, there exist a spanning closed dynamic $H$-trail in $G$ with at most one change.
\end{proof}

\begin{theorem}\label{theo:Dynamic Ore}
Let $G$ be an $H$-colored multigraph with $n$ vertices such that $G_u$ a complete $k_u$-partite graph, for every $u$ in $V(G)$ and for some $k_u$, $k_u \geq 2$. 
\begin{enumerate}[label=(\alph*)]
\item If $\delta_{dym}(u) + \delta_{dym}(v) \geq n$, for every $\{u,v\} \subseteq V(G)$ such that $E_{uv}$ is not a dynamic edge set, then $G$ has a hamiltonian dynamic $H$-cycle with at most one change.

\item If there is $x_0 \in V(G)$ such that $k_{x_0} \geq 3$, and $\delta_{dym}(x)+\delta_{dym}(y)\geq n+1$, for every $\{x,y\} \subseteq V(G)$, such that $E_{xy}$ is not a dynamic edge set, then $G$ has a hamiltonian $H$-cycle.
\end{enumerate}

\end{theorem}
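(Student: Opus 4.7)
Both parts rest on the auxiliary graph $G_{dym}$: two vertices $u,v$ are non-adjacent in $G_{dym}$ exactly when $E_{uv}$ is not a dynamic edge set, and $\deg_{G_{dym}}(u)=\delta_{dym}(u)$. For part (a) the hypothesis is thus precisely Ore's condition on $G_{dym}$, so Ore's theorem yields a Hamilton cycle $(x_0,x_1,\ldots,x_{n-1},x_0)$ of $G_{dym}$, along which every $E_{x_ix_{i+1}}$ (indices mod $n$) is a dynamic edge set. Applying Corollary \ref{lemma:tray-ciclo} to the path $(x_0,x_1,\ldots,x_{n-1})$ together with the closing dynamic edge set $E_{x_{n-1}x_0}$ then lifts this to a hamiltonian dynamic $H$-cycle of $G$ with at most one change, proving (a).

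For part (b), the condition $\delta_{dym}(x)+\delta_{dym}(y)\geq n+1$ is the classical Ore-type condition that makes $G_{dym}$ hamiltonian-connected, and in particular guarantees a Hamilton cycle of $G_{dym}$ through any prescribed edge at $x_0$. Fix such a Hamilton cycle $(x_0,x_1,\ldots,x_{n-1},x_0)$ and run the forward procedure of Proposition \ref{proposition:walk}: starting from any $e_0\in E_{x_0x_1}$ we obtain edges $e_i\in E_{x_ix_{i+1}}$ with $c(e_i)c(e_{i+1})\in E(H)$ for $i=0,\ldots,n-2$, and a closing $e_{n-1}\in E_{x_{n-1}x_0}$ with $c(e_{n-2})c(e_{n-1})\in E(H)$. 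The outstanding task is to arrange also $c(e_{n-1})c(e_0)\in E(H)$, i.e., that $e_{n-1}$ and $e_0$ lie in distinct parts of the partition of $V(G_{x_0})$; this is exactly what turns the resulting closed walk from a dynamic $H$-cycle with one change at $x_0$ into a genuine hamiltonian $H$-cycle.

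Here $k_{x_0}\geq 3$ enters essentially. Since $E_{x_0x_1}$ is a dynamic edge set, by Observation 1 it meets at least two distinct parts of $V(G_{x_0})$, so $e_0$ has at least two admissible parts at $G_{x_0}$; the existence of a third part, together with the hamiltonian-connectedness supplied by the stronger degree hypothesis, is what breaks the ``parity obstruction'' that can force a same-part closure when $G_{x_0}$ is merely bipartite. Concretely, I plan to argue that either a direct switch of $e_0$ to a second admissible part of $G_{x_0}$, after re-propagating, makes $e_{n-1}$ land in a part different from that of $e_0$, or, if every $e_0\in E_{x_0x_1}$ still produces a same-part $e_{n-1}$, hamiltonian-connectedness of $G_{dym}$ lets me reroute the Hamilton cycle through a neighbour $u$ of $x_0$ whose edge set $E_{x_0u}$ meets the third part of $V(G_{x_0})$ promised by $k_{x_0}\geq 3$, after which the forward procedure closes without a change. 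The main obstacle will be carrying out this verification rigorously: one must analyse which parts of $V(G_{x_0})$ are met by $E_{x_0x_1}$ and $E_{x_{n-1}x_0}$, control how the forward procedure responds to a switch of $e_0$ or a reroute of the Hamilton cycle, and confirm that with three parts available at $x_0$ together with the $+1$ of Ore slack, at least one admissible configuration always closes as an $H$-cycle.
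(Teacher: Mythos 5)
Part (a) of your proposal is correct and coincides with the paper's argument: apply Ore's theorem to $G_{dym}$ and lift the resulting Hamilton cycle via Corollary \ref{lemma:tray-ciclo}.

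Part (b), however, is only a plan, not a proof, and the plan as stated has a concrete flaw. You propose, when every choice of $e_0$ closes badly, to ``reroute the Hamilton cycle through a neighbour $u$ of $x_0$ whose edge set $E_{x_0u}$ meets the third part of $V(G_{x_0})$'' using hamiltonian-connectedness of $G_{dym}$. But a vertex $u$ for which $E_{x_0u}$ contains an edge of a third part need not be adjacent to $x_0$ in $G_{dym}$ at all: $E_{x_0u}$ could lie entirely inside that third part, hence fail to be a dynamic edge set, and then no Hamilton cycle of $G_{dym}$ passes through the pair $\{x_0,u\}$. So hamiltonian-connectedness of $G_{dym}$ does not deliver the reroute you need. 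The paper's proof gets around exactly this point by permitting one distinguished \emph{non}-dynamic pair $\{x_0,x_p\}$ on the rerouted cycle: it defines $A$ as the set of edges at $x_0$ adjacent in $G_{x_0}$ to everything in $E_{x_0x_1}\cup E_{x_0x_n}$ (nonempty because $k_{x_0}\ge 3$), takes $p=\max\{i: E_{x_0x_i}\cap A\neq\emptyset\}$, and uses the single edge $g\in E_{x_0x_p}\cap A$ there, with $g$'s universal adjacency in $G_{x_0}$ guaranteeing the closure at $x_0$.

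Beyond that, the combinatorial core of the paper's argument is absent from your sketch: (i) the preliminary split according to whether some $G_{x_i}[E_{x_ix_{i-1}}\cup E_{x_ix_{i+1}}]$ has at least three parts (in which case a local three-edge switch already yields the $H$-cycle) versus all of them being bipartite; and (ii) in the bipartite case, the crossing-chord rotation: if $E_{x_1x_{p+1}}$ is dynamic one reroutes directly, and otherwise the hypothesis $\delta_{dym}(x_1)+\delta_{dym}(x_{p+1})\ge n+1$ on the non-dynamic pair $\{x_1,x_{p+1}\}$ forces a pair of crossing chords ($E_{x_1x_j}$ and $E_{x_{p+1}x_{j-1}}$, or $E_{x_1x_j}$ and $E_{x_{p+1}x_{j+1}}$), with a final counting argument showing the alternative is impossible. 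Your intuition about why $k_{x_0}\ge 3$ and the $+1$ slack matter is right, and you correctly identify that the whole difficulty is closing the propagated edge sequence at $x_0$, but the verification you defer is precisely where the proof lives; as written, part (b) is not established.
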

\begin{proof}

a) Notice that $\delta_{G_{dym}}(u) + \delta_{G_{dym}}(v) \geq n$ for every pair of non adjacent vertices $u$ and $v$ in $G_{dym}$. Hence, by Ore's Theorem, we have that $G_{dym}$ has a hamiltonian cycle, say $C=(x_1,x_2,\ldots, x_n, x_1)$.

Then, $C$ is a hamiltonian cycle in $G$ such that $E_{x_i x_{i+1}}$ is a dynamic edge set, for every $i \in \{1,2,\ldots,n\}$ (when $i=n$, then $x_{i+1}=x_1$). Therefore, by Corollary \ref{lemma:tray-ciclo}, there exist a hamiltonian dynamic $H$-cycle with at most one change.\vspace{1em}

b) Suppose that $G$ is an $H$-colored multigraph such that $G_u$ is a complete $k_u$-partite graph, for some $k_u \geq 2$, for every $u \in V(G)$, and $\delta_{dym}(x)+\delta_{dym}(y)\geq n+1$, for every $\{x,y\} \subseteq V(G)$ such that $E_{xy}$ is not a dynamic edge set and there is $x_0 \in V(G)$ such that $k_{x_0} \geq 3$.

It follows from (a) that $G$ has a hamiltonian cycle such that $E_{x_ix_{i+1}}$ is a dynamic edge set, for every $i \in \{0,\ldots,n\}$ (when $i=n$, then $x_{i+1}=x_0$), say $C=(x_0,x_1,\ldots,x_n,x_0)$.

Notice that $G_{x_i}[E_{x_ix_{i-1}} \cup E_{x_ix_{i+1}}]$ is a complete $k'_{x_i}$-partite graph, where $2 \leq k'_{x_i} \leq k_{x_i}$, for every $i \in \{0,\ldots,n\}$.

\textbf{Case 1.} There exists $i \in \{0,\ldots,n\}$ such that $k'_{x_i} \geq 3$.

Then there exist $e \in E_{x_{i-1}x_i}$, $g \in E_{x_ix_{i+1}}$ and $f \in E_{x_ix_{i-1}} \cup E_{x_ix_{i+1}}$ which are in different parts of $G_{x_i}[E_{x_ix_{i-1}} \cup E_{x_ix_{i+1}}]$.

If $f \in E_{x_ix_{i-1}}$. By Corollary \ref{lemma:tray-ciclo}, there is an $H$-path $T_1=(x_i,g,x_{i+1},\ldots,x_{i-2},e_{i-2},x_{i-1})$. Hence, $C_1=(x_i,T_1,x_{i-1},e,x_i)$ or $C_2=(x_i,T_1,x_{i-1},f,x_i)$ is a hamiltonian $H$-cycle in $G$.

If $f \in E_{x_ix_{i+1}}$. By Corollary \ref{lemma:tray-ciclo}, there is an $H$-path $T_2=(x_i,e,x_{i-1},\ldots,x_{i+2},e_{i+2},x_{i+1})$. Hence, $C_3=(x_i,T_2,x_{i+1},f,x_i)$ or $C_4=(x_i,T_2,x_{i-1},g,x_i)$ is a hamiltonian $H$-cycle in $G$.\vspace{1em}

\textbf{Case 2.} $k'_{x_i} = 2$, for every $x \in V(G)$, i.e., $G_{x_i}[E_{x_ix_{i-1}} \cup E_{x_ix_{i+1}}]$ is a complete bipartite graph.

Let $A=\{g \in V(G_{x_0}) : ge \in E(G_{x_0}) \text{ for every } e \in V(G_{x_0}[E_{x_0x_1} \cup E_{x_0x_n}])\}$. Since $G_{x_0}$ is a complete $k_{x_0}$-partite graph and $k_{x_0} \geq 3$, then $A \neq \emptyset$. Let $p =  max\{i : E_{x_0x_i} \cap A \neq \emptyset\}$. Notice that $p \not\in \{0,1,n\}$ because of the condition of the case and there is no loops.

If $E_{x_1x_{p+1}}$ is a dynamic edge set. By Corollary \ref{lemma:tray-ciclo} and $E_{x_px_{p-1}}$ is a dynamic edge set, there is an $H$-path $T_3=(x_p,e_p,x_{p-1},\ldots, x_1,e_1,x_{p+1},e_{p+1},x_{p+2},\ldots,x_n)$ such that $ge_p \in E(G_{v_p})$. Since $E_{x_nx_0}$ is a dynamic edge set and $g \in A$, we have that there is an edge $e_n \in E_{x_nx_0}$ such that $C_5=(x_0,g,x_1,T,x_n,e_n,x_0)$ is a hamiltonian $H$-cycle in $G$.

If $E_{x_1x_{p+1}}$ is not a dynamic edge set, then by the hypothesis $\delta_{dym}(x_1)+\delta_{dym}(x_{p+1})\geq n+1$. 

\textbf{Subcase 1.} There is $j$, where $2 < j \leq p$, such that $E_{x_1x_j}$ and $E_{x_{p+1}x_{j-1}}$ are dynamic edge sets.

When $j=p$, Corollary \ref{lemma:tray-ciclo} and the fact that $E_{x_1x_p}$ is a dynamic edge set, imply that $T_4=(x_0,g,x_p,e_p,x_1,\ldots,x_{p-1},e_{p-1},x_{p+1},\ldots,x_n)$ is an $H$-path. Since, $E_{x_nx_0}$ is a dynamic edge set and $g \in A$, there is an edge $e_n \in E_{x_nx_0}$ such that  $C_4=(x_0,g,x_p,T_4,x_n,e_n,x_0)$ is a hamiltonian $H$-cycle. 
Otherwise, $T_5=(x_0,g,x_p,e_p,x_{p-1},\ldots,x_j,e_j,x_1,e_1,x_2,\ldots, x_{j-1},e_{j-1},x_{p+1},e_{p+1},x_{p+2},\ldots,x_n)$ is an $H$-path. Since, $E_{x_nx_0}$ is a dynamic edge set and $g \in A$, there is an edge $e_n \in E_{x_nx_0}$ such that $C_5=(x_0,g,x_p,T_5,x_n,e_n,x_0)$ is a hamiltonian $H$-cycle. 
\vspace{1em}

\textbf{Subcase 2.} There is $j$, where $p+2 \leq j \leq n$, such that $E_{x_1x_j}$ and $E_{x_{p+1}x_{j+1}}$ are dynamic edge sets.

When $j=n$, Corollary \ref{lemma:tray-ciclo} and the fact that $E_{x_px_{p-1}}$ is a dynamic edge set, imply that $T_6=(x_0,g,x_p,e_p,x_{p-1},\ldots,x_1,e_1,x_n,e_n,x_{n-1},\ldots,x_{p+1})$ is an $H$-path. Since $E_{x_0x_{p+1}}$ is a dynamic edge set, $g \in A$ and by the maximality of $g$; there is an edge $e_{p+1} \in E_{x_{p+1}x_0}$ such that $C_6=(x_0,T_6,x_{p+1},e_{p+1},x_0)$ is a hamiltonian $H$-cycle. 
Otherwise, $T_7=(x_0,g,x_p,e_p,x_{p-1},\ldots,x_1,e_1,x_j,$ $e_j,x_{j-1},\ldots, x_{p+1},e_{p+1},x_{j+1},e_{j+1},x_{j+2},\ldots,x_n)$ is an $H$-path. Since, $E_{x_nx_0}$ is a dynamic edge set and $g \in A$, there is an edge $e_n \in E_{x_nx_0}$ such that $C_7=(x_0,g,x_p,T_7,x_n,e_n,x_0)$ is a hamiltonian $H$-cycle. 
\vspace{1em}

\textbf{Subcase 3.} For each $j$, where $2 < j \leq p$, at least one of $E_{x_1x_j}$ or $E_{x_{p+1}x_{j-1}}$ is not a dynamic edge set, and for each $k$, where $p+2 \leq k \leq n$, at least one of $E_{x_1x_k}$ or $E_{x_{p+1}x_{k+1}}$ is not a dynamic edge sets.

In this case, $\delta_{dym}(x_{p+1}) \leq (n-2)- (\delta_{dym}(x_1)-2) = n-\delta_{dym}(x_1)$. So, $\delta_{dym}(x_1)+\delta_{dym}(x_{p+1})\leq n$, a contradiction.

Therefore, $G$ has a hamiltonian $H$-cycle.
\end{proof}

We think (but still we cannot prove) that the statement of Theorem \ref{theo:Dynamic Ore}b remains true if we replace the condition $\delta_{dym}(x)+\delta_{dym}(y)\geq n+1$ by $\delta_{dym}(x)+\delta_{dym}(y)\geq n$. Moreover, we cannot replace it by $\delta_{dym}(x)+\delta_{dym}(y)\geq n-1$, since if we $H$-color $G$, the multigraph resulting from the union of two $K_n^3$ that share a unique vertex, in such a way that every pair of parallel edges has different color, where $H$ is a complete simple graph with at least three vertices. Then, $G$ has no hamiltonian $H$-cycle.

\begin{theorem}
Let $G$ be an $H$-colored multigraph with $n$ vertices such that $G_u$ is a complete $k_u$-partite graph, for every $u$ in $V(G)$ and for some $k_u$, $k_u \geq 2$. If $\delta_{dym}(u) + \delta_{dym}(v) \geq n-1$, for every pair of distinct  vertices $u$ and $v$ of $G$ such that $E_{uv}$ is not a dynamic edge set, then $G$ has a hamiltonian $H$-path.
\end{theorem}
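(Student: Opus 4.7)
The plan is to reduce the problem to the classical Ore-type hamiltonian path theorem applied to the dynamic graph $G_{dym}$, and then to lift the resulting hamiltonian path of $G_{dym}$ to an actual $H$-path of $G$ by invoking Corollary \ref{lemma:tray-ciclo}.

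First, I would observe that by the very definition of $G_{dym}$, for every vertex $u$ of $G$ we have $\delta_{G_{dym}}(u) = \delta_{dym}(u)$, and that a pair $\{u,v\}$ is non-adjacent in $G_{dym}$ if and only if $E_{uv}$ is not a dynamic edge set in $G$. Hence the hypothesis of the theorem translates directly into the Ore-type condition
\[
\delta_{G_{dym}}(u) + \delta_{G_{dym}}(v) \geq n-1
\]
for every pair of non-adjacent vertices $u, v$ in $G_{dym}$. Since $G_{dym}$ is a simple graph on $n$ vertices, the classical Ore theorem for hamiltonian paths (a standard corollary of Ore's theorem, obtained by adding a universal vertex) yields a hamiltonian path $P = (x_0, x_1, \ldots, x_{n-1})$ in $G_{dym}$.

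Next, by the definition of $G_{dym}$, for every $i \in \{0, \ldots, n-2\}$ the set $E_{x_i x_{i+1}}$ is a dynamic edge set in $G$. Therefore $P$ is a path in $G$ satisfying exactly the hypotheses of Corollary \ref{lemma:tray-ciclo} (we are already assuming that $G_u$ is a complete $k_u$-partite graph for every $u \in V(G)$). Applying that corollary produces edges $e_i \in E_{x_i x_{i+1}}$ such that
\[
P' = (x_0, e_0, x_1, e_1, \ldots, x_{n-2}, e_{n-2}, x_{n-1})
\]
is an $H$-path in $G$, which spans $V(G)$ and is therefore a hamiltonian $H$-path.

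There is essentially no serious obstacle in this argument; the novelty lies only in noticing that the dynamic-degree hypothesis is literally the Ore condition on $G_{dym}$, and that Corollary \ref{lemma:tray-ciclo} handles the lift from a path of $G_{dym}$ to an $H$-path of $G$ without introducing any lane change. The only point that deserves an explicit mention is the reduction of the hamiltonian-path version of Ore's theorem to the hamiltonian-cycle version, but this is folklore and can be stated in one line.
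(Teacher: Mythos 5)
Your proof is correct and is essentially the argument the paper intends: the theorem is stated without proof precisely because it is the path analogue of Theorem \ref{theo:Dynamic Ore}(a), obtained by applying the Ore-type hamiltonian path theorem to $G_{dym}$ (where $\delta_{G_{dym}}(u)=\delta_{dym}(u)$ and non-adjacency means ``not a dynamic edge set'') and then lifting the resulting path via the first part of Corollary \ref{lemma:tray-ciclo}, which yields a genuine $H$-path with no changes. No gaps.
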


\begin{theorem}
Let $G$ be an $H$-colored multigraph with $n$ vertices such that $G_u$ is a complete $k_u$-partite graph, for every $u$ in $V(G)$ and for some $k_u$, $k_u \geq 2$. If $\delta_{dym}(u) + \delta_{dym}(v) \geq n+1$, for every pair of distinct  vertices $u$ and $v$ of $G$ such that $E_{uv}$ is not a dynamic edge set, then for every pair of distinct vertices $x$ and $y$, there is a hamiltonian $H$-path between $x$ and $y$.
\end{theorem}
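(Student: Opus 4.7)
The plan is to reduce the statement to the classical Ore-type theorem for Hamilton\-connected graphs applied to the auxiliary simple graph $G_{dym}$, and then lift a hamiltonian $xy$-path of $G_{dym}$ to a hamiltonian $H$-path of $G$ via Corollary \ref{lemma:tray-ciclo}. This mirrors exactly the strategy used in Theorem \ref{theo:Dynamic Ore}(a), where a hamiltonian cycle of $G_{dym}$ was produced by Ore's theorem and then lifted; here the Ore-type hypothesis is one unit stronger, so I will instead use its Hamilton-connected variant.

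First, I would record the immediate translation of the hypothesis. By the definition of $G_{dym}$, we have $d_{G_{dym}}(u) = \delta_{dym}(u)$ for every $u \in V(G)$, and two distinct vertices are non-adjacent in $G_{dym}$ if and only if $E_{uv}$ is not a dynamic edge set of $G$. Therefore, the assumption $\delta_{dym}(u) + \delta_{dym}(v) \geq n+1$ over such pairs says precisely that
\[
d_{G_{dym}}(u) + d_{G_{dym}}(v) \geq n+1
\]
for every pair of distinct non-adjacent vertices of $G_{dym}$.

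Next, I would invoke Ore's classical theorem on Hamilton-connectedness: any simple graph $F$ on $n \geq 3$ vertices satisfying $d_F(u) + d_F(v) \geq n+1$ for every pair of non-adjacent vertices is Hamilton-connected, meaning that any two distinct vertices of $F$ are joined by a hamiltonian path of $F$. Applying this to $G_{dym}$, for any two distinct vertices $x, y \in V(G) = V(G_{dym})$ there exists a hamiltonian $xy$-path
\[
P = (x = x_0, x_1, \ldots, x_{n-1} = y)
\]
in $G_{dym}$. By the definition of $G_{dym}$, each $E_{x_i x_{i+1}}$ is a dynamic edge set of $G$. Hence the vertex sequence $(x_0, x_1, \ldots, x_{n-1})$ is a path in $G$ whose consecutive $E$-sets are all dynamic, so Corollary \ref{lemma:tray-ciclo} furnishes edges $e_i \in E_{x_i x_{i+1}}$ for which $(x_0, e_0, x_1, e_1, \ldots, e_{n-2}, x_{n-1})$ is an $H$-path. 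By construction this is a hamiltonian $H$-path of $G$ from $x$ to $y$, which is the desired conclusion.

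There is no real obstacle here: the two ingredients, Ore's Hamilton-connectedness theorem and Corollary \ref{lemma:tray-ciclo}, combine directly, and the hypothesis $k_u \geq 2$ for every $u$ is needed precisely to make Corollary \ref{lemma:tray-ciclo} applicable. The only delicate point is handling small orders: for $n \leq 2$ the statement is trivial or vacuous, so one may assume $n \geq 3$, the range in which Ore's theorem is valid.
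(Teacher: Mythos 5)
Your proof is correct: the paper states this theorem without giving a proof, and your argument is precisely the one suggested by the paper's proof of Theorem \ref{theo:Dynamic Ore}(a) --- translate the hypothesis to the auxiliary graph $G_{dym}$, apply Ore's Hamilton-connectedness theorem (degree sum at least $n+1$ over non-adjacent pairs) to get a hamiltonian $xy$-path there, and lift it to an $H$-path of $G$ via Corollary \ref{lemma:tray-ciclo}. Your remark that no ``change'' is needed (unlike the cycle case, where closing up may force one) and your handling of small $n$ are both sound.
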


\begin{corollary}\label{Cor:Dynamic Dirac}
Let $G$ be an $H$-colored multigraph such that $G_u$ a complete $k_u$-partite graph, for every $u$ in $V(G)$ and for some $k_u$, $k_u \geq 2$. If $\delta_{dym}(u) \geq n/2$, for every $u \in V(G)$, then $G$ has a hamiltonian dynamic $H$-cycle with at most one change.
\end{corollary}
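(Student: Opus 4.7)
The plan is to observe that this is the Dirac-type specialization of the Ore-type result in Theorem \ref{theo:Dynamic Ore}(a), and to derive it by a direct application of that theorem.

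First I would note that the hypotheses on $G$ (namely that $G_u$ is a complete $k_u$-partite graph for some $k_u \geq 2$, for every $u \in V(G)$) are exactly the structural hypotheses required by Theorem \ref{theo:Dynamic Ore}(a). So it only remains to verify the degree-sum condition.

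Next, for any pair of distinct vertices $\{u,v\} \subseteq V(G)$ with $E_{uv}$ not a dynamic edge set, I would use the hypothesis $\delta_{dym}(u) \geq n/2$ and $\delta_{dym}(v) \geq n/2$ to conclude
\[
\delta_{dym}(u) + \delta_{dym}(v) \;\geq\; \frac{n}{2} + \frac{n}{2} \;=\; n.
\]
(In fact this inequality holds for every pair of vertices, but we only need it for pairs whose connecting edge set is not dynamic.) Therefore the hypothesis of Theorem \ref{theo:Dynamic Ore}(a) is satisfied.

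Finally, applying Theorem \ref{theo:Dynamic Ore}(a) directly yields the existence of a hamiltonian dynamic $H$-cycle with at most one change, completing the proof. There is no real obstacle here: the statement is a straightforward corollary, with the entire work having already been done inside the proof of Theorem \ref{theo:Dynamic Ore}(a) via Ore's theorem applied to $G_{dym}$ and then lifted to $G$ by Corollary \ref{lemma:tray-ciclo}.
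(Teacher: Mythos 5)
Your proposal is correct and is exactly the intended derivation: the paper states this as an unproved corollary of Theorem \ref{theo:Dynamic Ore}(a), and the only step needed is the observation that $\delta_{dym}(u)+\delta_{dym}(v)\geq n/2+n/2=n$ for every relevant pair, which you verify. Nothing further to add.
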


\begin{corollary}\label{Cor:Dynamic Dirac 3+}
Let $G$ be an $H$-colored multigraph such that $G_u$ is a complete $k_u$-partite graph, for every $u \in V(G)$, and for some $k_u$, $k_u \geq 3$. If $\delta_{dym}(x) \geq (n+1)/2$, for every $x \in V(G)$, then $G$ has a hamiltonian $H$-cycle.
\end{corollary}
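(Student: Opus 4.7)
The plan is to derive this as an immediate consequence of Theorem \ref{theo:Dynamic Ore}(b), whose hypotheses are easily seen to follow from the stronger (and more symmetric) assumption given here. There is essentially no combinatorial obstacle; the entire argument is an arithmetic check combined with verifying the structural hypothesis on some $G_{x_0}$.

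More precisely, first I would observe that the hypothesis ``$G_u$ is a complete $k_u$-partite graph with $k_u \geq 3$ for every $u \in V(G)$'' supplies (in particular) a vertex $x_0 \in V(G)$ with $k_{x_0} \geq 3$, which is exactly the pointwise structural assumption required by Theorem \ref{theo:Dynamic Ore}(b). Since the remaining assumption ``$G_u$ is complete $k_u$-partite with $k_u \geq 2$'' needed by that theorem is obviously implied by $k_u \geq 3$, the structural hypothesis of Theorem \ref{theo:Dynamic Ore}(b) is met.

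Next, for the degree condition, let $\{x,y\} \subseteq V(G)$ be any pair of distinct vertices such that $E_{xy}$ is not a dynamic edge set. By hypothesis, $\delta_{dym}(x) \geq (n+1)/2$ and $\delta_{dym}(y) \geq (n+1)/2$, so
\[
\delta_{dym}(x) + \delta_{dym}(y) \geq \frac{n+1}{2} + \frac{n+1}{2} = n+1.
\]
This verifies the degree hypothesis of Theorem \ref{theo:Dynamic Ore}(b) for every such pair (trivially including the case $x=y$ is not needed, and the bound is the same regardless of parity of $n$).

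Finally, applying Theorem \ref{theo:Dynamic Ore}(b) yields a hamiltonian $H$-cycle in $G$, which is exactly the conclusion sought. The only ``hard part'' here is conceptual rather than technical: one must notice that the condition $k_u \geq 3$ for all $u$ is strictly stronger than what Theorem \ref{theo:Dynamic Ore}(b) needs (which is $k_u \geq 2$ for all $u$ together with a single vertex $x_0$ of partition number at least $3$), so no further structural verification is necessary.
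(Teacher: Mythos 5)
Your proposal is correct and matches the paper's intent: the corollary is stated without proof precisely because it follows immediately from Theorem~\ref{theo:Dynamic Ore}(b) in the way you describe, namely by noting that $k_u \geq 3$ for all $u$ supplies the required vertex $x_0$ with $k_{x_0}\geq 3$ (and a fortiori $k_u\geq 2$ everywhere), while $\delta_{dym}(x)\geq (n+1)/2$ for all $x$ gives $\delta_{dym}(x)+\delta_{dym}(y)\geq n+1$ for every pair, in particular for pairs whose edge set is not dynamic. No gaps.
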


Recall that a $c$-edge-colored multigraph can be represented as an $H$-colored multigraph if $H$ is a complete graph with $c$ vertices and without loops. Moreover, if $\{e,f\} \subseteq E_{xy}$, for some $\{x,y\} \subseteq V(G)$, such that $c(e)\neq c(f)$, i.e., $e$ and $f$ are parallel edges of different color, then $E_{xy}$ is a dynamic edge set, by Observation 1.

\begin{corollary}
Let $G$ be a $c$-edge-colored multigraph such that every vertex is incident to at least two edges of different color. If at least one vertex is incident to at least three edges of different color and, for every pair of distinct vertices $x$ and $y$, $\delta_{dym}(x) + \delta_{dym}(y) \geq n+1$, then $G$ has a PC hamiltonian cycle.
\end{corollary}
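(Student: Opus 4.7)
The plan is to reduce this statement directly to Theorem \ref{theo:Dynamic Ore}(b) via the correspondence between $c$-edge-colored multigraphs and $H$-colored multigraphs highlighted in the paragraph immediately preceding the corollary. First, I would represent the given $c$-edge-colored multigraph $G$ as an $H$-colored multigraph by taking $H$ to be the complete simple graph on $c$ vertices with no loops (one vertex per color). Under this representation, an edge $c(e)c(f)$ lies in $E(H)$ iff $c(e) \neq c(f)$, so an $H$-walk in $G$ is exactly a properly colored walk, and an $H$-cycle in $G$ is exactly a PC cycle. Thus, producing a hamiltonian $H$-cycle in $G$ is equivalent to producing a PC hamiltonian cycle.

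Next, I would verify that each auxiliary graph $G_u$ is complete multipartite with an appropriate number of parts. By definition, $V(G_u)$ is the set of edges of $G$ incident with $u$, and two such edges $a,b$ are adjacent in $G_u$ iff $c(a)c(b) \in E(H)$, i.e., iff $c(a) \neq c(b)$. Hence the color classes of edges incident with $u$ form the parts of a complete multipartite decomposition of $G_u$, with $k_u$ equal to the number of distinct colors appearing on edges at $u$. The hypothesis that every vertex is incident to at least two edges of different color gives $k_u \geq 2$ for every $u \in V(G)$, and the hypothesis that some vertex is incident to at least three edges of different color gives an $x_0 \in V(G)$ with $k_{x_0} \geq 3$.

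Finally, the degree-sum hypothesis $\delta_{dym}(x) + \delta_{dym}(y) \geq n+1$ is assumed for \emph{every} pair of distinct vertices, which in particular implies the inequality for those pairs $\{x,y\}$ such that $E_{xy}$ is not a dynamic edge set. Consequently, all the hypotheses of Theorem \ref{theo:Dynamic Ore}(b) are satisfied, so $G$ admits a hamiltonian $H$-cycle, which is the desired PC hamiltonian cycle. There is no real obstacle here: the work is purely one of hypothesis-checking, and the only step worth being careful about is making explicit that the parts of the multipartite decomposition of $G_u$ coincide with the color classes at $u$, since this is what translates the two hypotheses about numbers of colors at vertices into the conditions $k_u \geq 2$ and $k_{x_0} \geq 3$ required by Theorem \ref{theo:Dynamic Ore}(b).
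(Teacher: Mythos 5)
Your proposal is correct and follows exactly the route the paper intends: represent $G$ as an $H$-colored multigraph with $H$ the complete loopless graph on the $c$ colors, observe that each $G_u$ is complete multipartite with parts given by the color classes at $u$ (so the two hypotheses on numbers of colors at vertices yield $k_u\geq 2$ everywhere and $k_{x_0}\geq 3$ for some $x_0$), note that the degree-sum condition for all pairs in particular covers the non-dynamic pairs, and apply Theorem \ref{theo:Dynamic Ore}(b). No gaps; the hypothesis-checking is complete.
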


\begin{corollary}
Let $G$ be a $c$-edge-colored multigraph such that every vertex is incident to at least two edges of different color. If $\delta_{dym}(x) \geq (n+1)/2$, for every $x \in V(G)$, and at least one vertex is incident to at least three edges of different color, then $G$ has a PC hamiltonian cycle.
\end{corollary}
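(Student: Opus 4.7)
The plan is to deduce this corollary directly from Corollary \ref{Cor:Dynamic Dirac 3+} by translating the $c$-edge-coloring language into the $H$-coloring language. First I would let $H=K_c$ be the complete simple graph on the $c$ colors (with no loops) and view the given $c$-edge-colored multigraph $G$ as an $H$-colored multigraph under the identification used throughout the paper. In this identification, adjacency of two colors in $H$ is exactly ``being different colors'', so an $H$-walk is nothing but a properly colored walk and an $H$-cycle is precisely a PC cycle (including the closing-edge condition, since $H$ has no loops).

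Next I would verify the structural hypothesis on the auxiliary graphs $G_u$. For a fixed $u\in V(G)$, two edges $e,f$ incident with $u$ are adjacent in $G_u$ iff $c(e)c(f)\in E(H)$, which (since $H=K_c$ has no loops) is equivalent to $c(e)\neq c(f)$. Therefore the partition of $V(G_u)$ by color classes realizes $G_u$ as a complete $k_u$-partite graph, where $k_u$ equals the number of distinct colors incident with $u$. The hypothesis ``every vertex is incident to at least two edges of different color'' gives $k_u\geq 2$ for every $u\in V(G)$, and the hypothesis ``at least one vertex is incident to at least three edges of different color'' gives the existence of $x_0\in V(G)$ with $k_{x_0}\geq 3$, which is exactly what Corollary \ref{Cor:Dynamic Dirac 3+} requires.

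Finally, the degree condition $\delta_{dym}(x)\geq (n+1)/2$ for every $x\in V(G)$ is exactly the remaining hypothesis of Corollary \ref{Cor:Dynamic Dirac 3+}, so applying that corollary produces a hamiltonian $H$-cycle in $G$. By the translation in the first step, this $H$-cycle is a PC hamiltonian cycle of $G$, finishing the proof.

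There is no serious obstacle: the only step requiring a moment of care is recognising that, because $H$ has no loops, the parts of $G_u$ are precisely the color classes at $u$, so the two ``at least two / at least three colors at a vertex'' hypotheses translate verbatim into ``$k_u\geq 2$ everywhere / $k_{x_0}\geq 3$ somewhere'' as needed to invoke Corollary \ref{Cor:Dynamic Dirac 3+}.
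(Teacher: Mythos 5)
Your proof is correct and is exactly the derivation the paper intends: the paper states this corollary without proof, immediately after the remark that a $c$-edge-colored multigraph is an $H$-colored multigraph for $H=K_c$ loopless, and your translation (parts of $G_u$ are the color classes at $u$, so the two incidence hypotheses become $k_u\geq 2$ everywhere and $k_{x_0}\geq 3$ somewhere) is the whole content. The one point worth flagging is the citation: under the paper's phrasing convention, Corollary \ref{Cor:Dynamic Dirac 3+} literally requires $k_u\geq 3$ for \emph{every} vertex, while your hypotheses only yield $k_u\geq 2$ everywhere and $k_{x_0}\geq 3$ at a single vertex; if one insists on that literal reading, the safe route is to invoke Theorem \ref{theo:Dynamic Ore}(b) directly, whose hypotheses ($k_u\geq 2$ for all $u$, some $k_{x_0}\geq 3$, and $\delta_{dym}(x)+\delta_{dym}(y)\geq n+1$, the last being an immediate consequence of $\delta_{dym}\geq (n+1)/2$) match yours verbatim and give the same hamiltonian $H$-cycle.
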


Recall that in a $c$-edge-colored multigraph, say $G$, we say that $N_i^G(x)$ denotes the set of vertices of $G$ that are joined to $x$ with an edge of color $i$. The $i$th degree of $x$, $x\in V(G)$, denoted by $\delta_i^G(x)$, is equal to $|N_i^G(x)|$, i.e., the cardinality of $N_i^G(x)$. When there is no confusion, for simplicity, we will write $N_i(x)$ and $\delta_i(x)$ instead of $N_i^G(x)$ and $\delta_i^G(x)$, respectively.

Its follows by the definition of $\delta_i^G(x)$ that if $\{e,f\} \subseteq E_{xu}$, for some $u \in V(G)$ such that $e$ and $f$ have the same color, then $\delta_i^{G\setminus \{e\}}(x)=\delta_i^{G\setminus \{f\}}(x)=\delta_i^G(x)$. So, in what follows, we will consider edge-colored multigraphs with no parallel edges with the same color. Therefore, if $G$ is an $c$-edge-colored multigraph, then $|E_{uv}| \leq c$, for every $\{u,v\}\subseteq V(G)$.

\begin{theorem}
Let $G$ be a $c$-edge-colored multigraph, $c\geq 3$, with $n$ vertices and $|E_{uv}|\leq c-1$, for every $\{u,v\}\subseteq V(G)$. If for every $x\in V(G)$, $\delta_i(x) \geq n/2$, for every $i \in \{1,\ldots,c\}$, then $G$ has PC hamiltonian cycle.
\end{theorem}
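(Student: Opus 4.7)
The plan is to view $G$ as an $H$-colored multigraph where $H$ is the complete simple graph on $c$ vertices with no loops, so that $H$-cycles in $G$ are exactly the properly colored cycles in $G$. Assuming $n \geq 3$ (otherwise the statement is vacuous), the goal is to invoke Corollary \ref{Cor:Dynamic Dirac 3+}, which requires checking two things for this choice of $H$: that $G_u$ is a complete $k_u$-partite graph with $k_u \geq 3$ for every $u \in V(G)$, and that $\delta_{dym}(x) \geq (n+1)/2$ for every $x \in V(G)$.

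The first hypothesis is immediate from the construction. Since $H$ is loopless and complete, two edges of $G$ incident to a common vertex $u$ are adjacent in $G_u$ if and only if they carry different colors; thus $G_u$ is complete multipartite with parts indexed by the color classes at $u$. The assumption $\delta_i(u) \geq n/2 \geq 1$ for every color $i$ ensures that each of the $c$ color classes at $u$ is non-empty, yielding $k_u = c \geq 3$.

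The main step is the second condition, which I would establish by a short degree-sum argument that uses the hypothesis $|E_{xv}| \leq c-1$ in an essential way. Because no two parallel edges share a color (by the paper's convention), $E_{xv}$ is a dynamic edge set if and only if $|E_{xv}| \geq 2$; and $|E_{xv}| \leq c-1$ for every pair $\{x,v\}$. Write $m(x) = \delta_{dym}(x)$ and let $s(x)$ denote the number of vertices $v \neq x$ with $|E_{xv}| = 1$, so that $s(x) + m(x) \leq n-1$. Double-counting the edges at $x$ by color gives
\[
\frac{cn}{2} \;\leq\; \sum_{i=1}^{c} \delta_i(x) \;=\; \sum_{v \neq x} |E_{xv}| \;\leq\; s(x) + (c-1)\,m(x) \;\leq\; (n-1-m(x)) + (c-1)\,m(x),
\]
which simplifies to $(c-2)\,m(x) \geq (c-2)\,n/2 + 1$, so $m(x) > n/2$; since $m(x)$ is an integer, this forces $m(x) \geq \lceil (n+1)/2 \rceil$.

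With both hypotheses verified, Corollary \ref{Cor:Dynamic Dirac 3+} produces a hamiltonian $H$-cycle, i.e.\ a properly colored hamiltonian cycle in $G$. The only subtle point of the argument is the degree-sum step: it is precisely the hypothesis $|E_{xv}| \leq c-1$ (as opposed to the weaker bound $|E_{xv}| \leq c$ allowed by the paper's convention) that produces the $+1$ needed to push $\delta_{dym}(x)$ strictly above $n/2$ and hence up to $\lceil (n+1)/2 \rceil$; without this hypothesis the estimate would degrade to $m(x) \geq n(c-2)/(2(c-1))$, which falls short of what Corollary \ref{Cor:Dynamic Dirac 3+} demands.
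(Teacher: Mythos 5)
Your proposal is correct and follows essentially the same route as the paper: reduce to Corollary \ref{Cor:Dynamic Dirac 3+} by double-counting the edges at each vertex ($\sum_i \delta_i(x) \leq (n-1-\delta_{dym}(x)) + (c-1)\delta_{dym}(x)$, using $|E_{xv}|\leq c-1$) to force $\delta_{dym}(x) \geq (n+1)/2$. Your direct version with the explicit integrality step is in fact slightly cleaner than the paper's contradiction argument, whose final chain of inequalities asserts $c(n+1)/2 \leq d(u)$ where only $cn/2 \leq d(u)$ is available, a slip that your rounding of $m(x) > n/2$ up to $\lceil (n+1)/2\rceil$ avoids.
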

\begin{proof}
Suppose that $G$ is a $c$-edge-colored multigraph, $c\geq 3$, with $n$ vertices,  $|E_{uv}|\leq c-1$, for every $\{u,v\}\subseteq V(G)$, and $\delta_i(x) \geq n/2$, for every $x\in V(G)$ and for every $i \subseteq \{1,\ldots,c\}$.

\textbf{Claim.} $\delta_{dym}(x)\geq (n+1)/2$, for every $x\in V(G)$.

Proceeding by contradiction, suppose that there is a vertex $u \in V(G)$ such that $\delta_{dym}(u)=k < (n+1)/2$.

On the one hand, since $\delta_i(u) \geq n/2$, for every $i \in \{1,\ldots,c\}$, we have that $d(u)\geq cn/2$, i.e, the number of edges incident to $x$ is at least $cn/2$.

On the other hand, if $E_{uy}$ is a dynamic edge set, then $u$ is joined to $y$ with at most $(c-1)$ edges. Otherwise, $E_{uy}$ is not a dynamic edge set and $u$ is joined to $y$ with at most one edge. Then, $d(u) \leq (n-1-k)+(c-1)k = n-1+(c-2)k$.

Therefore, $d(u) \leq n-1+(c-2)k < n-1+(c-2)(n+1/2)=c(n+1/2)-2 < c(n+1/2) \leq d(u)$, a contradiction.

Therefore, $\delta_{dym}(x)\geq (n+1)/2$, for every $x \in V(G)$, and by Corollary \ref{Cor:Dynamic Dirac 3+}, $G$ has PC hamiltonian cycle.
\end{proof}

We think (but still we cannot prove) that the previous theorem remains true, if we remove the condition ``$|E_{uv}| \leq c-1$, for every $\{u,v\} \subset V(G)$''.

\end{document}